\renewcommand*\subjclass[2][2010]{%
  \def\@subjclass{#2}%
  \@ifundefined{subjclassname@#1}{%
    \ClassWarning{\@classname}{Unknown edition (#1) of Mathematics
      Subject Classification; using '2010'.}%
  }{%
    \@xp\let\@xp\subjclassname\csname subjclassname@#1\endcsname
  }%
}
\newtheorem{theorem}{Theorem}[section]
\newtheorem{lemma}[theorem]{Lemma}
\newtheorem{corollary}[theorem]{Corollary}
\newtheorem{claim}[theorem]{Claim}
\newtheorem{proposition}[theorem]{Proposition}
\newtheorem{conjecture}[theorem]{Conjecture}
\newtheorem{question}[theorem]{Question}
\newtheorem{identity}[theorem]{Identity}
\theoremstyle{definition}
\newtheorem{definition}[theorem]{Definition}
 \newtheorem{example}[theorem]{Example}
\newtheorem{remark}[theorem]{Remark}
\renewcommand*\subjclass[2][2010]{%
  \def\@subjclass{#2}%
  \@ifundefined{subjclassname@#1}{%
    \ClassWarning{\@classname}{Unknown edition (#1) of Mathematics
      Subject Classification; using '1991'.}%
  }{%
    \@xp\let\@xp\subjclassname\csname subjclassname@#1\endcsname
  }%
}
\begin{document}
\title[On some discrete random variables...]{On some discrete random 
variables arising
from recent study on statistical analysis of compressive sensing}

\author{Romeo Me\v strovi\' c}
\address{Maritime Faculty Kotor, University of Montenegro, 
85330 Kotor, Montenegro} 
\email{romeo@ac.me}

 \subjclass{94A12, 60C05,  11A07, 05A10}
\keywords{Compressive sensing, Missing samples, Discrete Fourier transform,
Complex-valued discrete random variable,   
$k$th moment, Combinatorial identity, Eisenstein's irreducibility 
criterion, Welch bound.}
 
\begin{abstract}
  
The recent paper \cite{ssa} provides a statistical analysis for efficient 
detection of signal components when missing data samples are present.
Here we focus our attention to some complex-valued discrete random 
variables  $X_l(m,N)$ ($0\le l\le N-1$, 
$1\le M\le N$), which are closely related 
to the random variables
investigated by LJ. Stankovi\'c, S. Stankovi\'c and M. Amin in \cite{ssa}. 
In particular, by using a combinatorial approach, 
we prove that for $l\not=0$ the expected value of $X_l(m,N)$ is equal to zero, 
and we  deduce the expression for the variance
of the random variables $X_l(m,N)$. The same results are also deduced 
for the real part $U_l(m,N)$ and the imaginary part $V_l(m,N)$ of $X_l(m,N)$,  
as well as the facts that the $k$th moments
of $U_l(m,N)$ and $V_l(m,N)$ are equal to zero for every 
positive integer $k$ which  is not divisible by $N/\gcd(N,l)$.  
Moreover, some  additional assertions and  examples  concerning 
the random variables $X_l(m,N)$, $U_l(m,N)$  and $V_l(m,N)$ 
are also presented.
   \end{abstract}  
  \maketitle

\section{Motivation, definitions and related examples}

Recently,  LJ. Stankovi\'c, S. Stankovi\'c and M. Amin \cite{ssa}  
 provided a statistical analysis for efficient 
detection of signal components when missing data samples are present. 
As noticed in \cite{ssa}, this analysis is important for both the area
of L-statistics and {\it compressive sensing}.
In both cases, few samples are available due to either noisy sample 
elimination of random undersampling signal strategies. For more 
information on the development of compressive sensing (also known as {\it compressed sensing}, 
{\it compressive sampling}, or {\it sparse recovery}), see \cite{do}, \cite{fr}, 
\cite[Chapter 10]{s1} and \cite{sdt}. For an excellent survey on this topic 
with applications and related references, see  \cite{sr} (also see \cite{op}).

In \cite[Section 2]{ssa} (cf. \cite[Section II]{sso} and \cite{sid}) 
\cite[Section 2]{sid}), the authors considered a set of $N$ signal values $\Theta$ given by
   $$
\Theta=\{ s(1), s(2), \ldots, s(N)\},
  $$
 where
a signal which is sparse in the Discrete Fourier Transform (DFT) domain can 
be written as 
   $$
s(n)=\sum_{i=1}^KA_ie^{j2\pi k_{0i}n/N},\quad n\in\{1,2,\ldots,N\}, \leqno(1)
  $$ 
and  the level of sparsity is $K\ll N$, while $A_i$ and $k_{0i}$
denote amplitudes and frequencies of the signal components, respectively.
Notice that the relation $K\ll N$  means that most of components
of a considered signal are zero. 
The  application  of the DFT to the above sequence $\Theta$  leads to the set 
$\Phi(l,N)$ of the form (the set $\Phi$ in the equality (3) of \cite{ssa}): 
    $$
\Phi(l,N)=\{e^{-j2nl\pi/N}: n=1,2,\ldots,N\} \,\, {\rm with\,\, some\,\,
fixed}\,\,l\in \{0,1,\ldots, N-1\}.\leqno(2)
   $$

As usually, throughout our considerations we use the 
term ``multiset'' (often written as ``set'') to mean ``a totality having possible 
multiplicities''; so that two (multi)sets will be counted as equal if 
and only if they have the same elements with identical multiplicities.

Notice that (2) for $l=0$ imlies that 
 $$
\Phi(0,N)=\{\underbrace{1,1,\ldots,1}_N\}.
$$
Moreover, it is obvious that $\Phi(l,N)$ given by (2) is a set consisting of  
$N$ (distinct) elements if and only if  $l$ and $N$ are relatively prime 
positive integers.

Let ${\mathcal M}$ denote the collection of all
multisets $\Phi(l,N)$ of the form (2), i.e., 
   $$
{\mathcal M}=\{\Phi(l,N): N=1,2,\ldots;\, l=0,1,\ldots,N-1\}.
   $$ 
For simplicity and for our computational purposes, for   fixed $N\ge 1$
and $l$  such that $1\le l\le N-1$, in the sequel we shall often write 
$w:=e^{-j2l\pi/N}$. Accordingly, for each $l=1,2,\ldots, N-1$ the multiset 
$\Phi(l,N)$ defined by (2) can be written as    
  $$
\Phi(l,N)=\{w,w^2,\ldots,w^N\}.\leqno(3)
  $$
Furthermore (see \cite[Eq. (3)]{ssa}),  we have  
  $$
w+w^2+\cdots +w^{N}=0,\leqno(4)
   $$
or if we take $x(n)=e^{-j2nl\pi/N}$ ($n=1,2,\ldots N$), it is equivalent to 
   $$
x(1)+x(2)+\cdots +x(N)=0.
   $$
Here, as always in the sequel, we will assume that the signal length $N$
is an arbitrary fixed positive integer.  
Accordingly, assuming that $K=1$ and $A_1=1$, 
for any fixed $l\in\{1,2,\ldots,N-1\}$, 
in \cite{ssa} the authors considered a subset
$\Psi(l,N;m)$ of $\Phi(l,N)$ consisting of $m\ll N$ randomly positioned 
available samples (measurements), i.e., 
     $$
\Psi(l,N;m)=\{y(1), y(2),\ldots, y(m)\}\subset \Phi(l,N).\leqno(5)
      $$

Then the random variable  corresponding  
to the  DFT  over the available set of samples from $\Phi(l,N)$
is given by 
    $$
X_l(m,N)=:X_l(m)=\sum_{n=1}^my(n)=\sum_{n=1}^N(x(n)+\varepsilon(n)),\leqno(6)
   $$
where
     $$
(7)\,  \varepsilon(n)=\left\{ 
 \begin{array}{ll}
0 &  \textrm{for  remaining signal samples}\\
 -x(n)   =-\exp(-2jl\pi/N) & \textrm{for removed (unavailable) 
signal samples}.\\
\end{array}\right. 
  $$
Observe that $X_l(m,N)$ defined by (6) is a complex-valued discrete random 
variable formed as a sum of $m$ randomly positioned samples 
$y(1), y(2),\ldots,y(m)\in \Psi(l,N;m)$ 

\noindent $\subseteq \Phi(l,N)$.
 Let us notice that the theory currently available 
on compressive sensing  predicts that sampling 
sets chosen uniformly at random among all possible sets of 
a given fixed cardinality work well (see, e.g., \cite[Chapter 12]{fr}).
For some variations of this random variable see \cite{ssd}. 
 If the number $m$ of randomly positioned available samples (measurements)
is not fixed, but randomly chosen (i.e., if the number of terms 
in the sum (6) is itself a random variable), then the related
random variable $X_l(m,N)$ can be replaced (generalized) with the
corresponding the so-called  {\it compound random variable}.
These random variables were firstly systematicaly  studied   by W. Feller 
in his famous book \cite{f1}.
A combinatorial approach to the introductory study of the  
compound random variable followed by several examples was 
given in  \cite{m}.    

Notice that in the above definition of the random variable 
$X_l(m,N)$ given by (6), the number of randomly positioned samples,
$m$, is a fixed positive integer such that $1\le m\le N$.
We believe that in probabilistic study of sparse signal recovery 
it can be of interest the complex-valued discrete random variable
$\widetilde{X}_l(m,N)$  which  
may be considered as a   random analogue
(or ``free companion'' random variable)
 of the random 
variable $X_l(m,N)$, and it is studied and defined  in \cite{m4} 
 as follows.
   \begin{definition} 
Let $N$, $l$ and $m$ be nonnegative  integers such that $0\le l\le N-1$
and $1\le m\le N$.  Let  $B_n$ $(n=1,\ldots,N)$ be a sequence 
of independent identically distributed  {\it Bernoulli 
random variables} (binomial distributions) taking only
the values 0 and 1 with probability 0 and $m/N$, respectively,
i.e.,
   $$
B_n=\left\{ 
 \begin{array}{ll}
0 & \mathrm{with\,\,probability \,\,} 1-\frac{m}{N}\\
1 & \mathrm{with\,\,probability \,\,} \frac{m}{N}. 
 \end{array}\right.\leqno(8)
   $$
Then the discrete random variable $\widetilde{X}_l(m,N)$ is defined as
a sum
  $$
\widetilde{X}_l(m,N)=\sum_{n=1}^N\exp\left({-\frac{2jn l\pi}{N}}
\right)B_n.\leqno(9)
   $$
\end{definition}

From Definition 1.1 we see that the range of the random variable 
$\widetilde{X}_l(m,N)$ consists of all possible $2^N-1$ sums of the elements 
of (multi)set $\{e^{-j2nl\pi/N}:\, n=1,2,\ldots,N\}$.    

Observe that for $l=0$  $\widetilde{X}_l(m,N)$ becomes
   $$
\widetilde{X}_0(m,N)=\sum_{k=1}^NB_k\sim B\left(N,\frac{m}{M}\right),
\leqno(10)
   $$ 
where $B\left(N,m/N\right)$ is the  binomial distribution
with  parameters $N$ and $p=m/N$ and the probability mass function 
given by 
  $$
{\rm Prob}\left(B\left(N,\frac{m}{N}\right)=k\right)={N\choose k}
\left(\frac{m}{N}\right)^k\left(1-\frac{m}{N}\right)^{N-k}, 
\,\, k=0,1,\ldots , N.
   $$
Notice also that a {\it Bernoulli probability model}, similar 
to the distribution $\widetilde{X}_l(m,N)$ defined by (9), was often used in 
the famous paper \cite{crt} by   Cand\`{e}s, Romberg and Tao. Moreover, 
the random variables  $\widetilde{X}_l(m,N)$ have some similar probabilistic 
characteristics to those of $X_l(m,N)$. 

Now we return to the random variable $X_l(m)=X_l(m,N)$
defined by (6).
Let $(\xi_1, \xi_2, \ldots, \xi_N)$ be a $n$-tuple of integers 
$\xi_n$ which are  chosen uniformly at random from the set $\{0,1\}$ 
under the condition that 
   $$
\xi_1+\xi_2+\cdots+\xi_N=m.
   $$
Then the discrete random variable defined by (6) and considered
 in \cite[p. 402]{ssa} can be written as a sum
  $$
X_l(m)=\sum_{n=1}^N\xi_n\exp\left({-\frac{2jn l\pi}{N}}\right).
\leqno(11)
  $$
In fact, the above representation   means that the sparse signal 
considered in \cite{ssa} ``comes'' from the set of values of the  random variable 
$X_l(m)$.
In view of the above considerations,   in the form of  
its distribution law,  $X_l(m)$ may be defined as follows.

% Def . 1.2
  \begin{definition}
Let $N$, $l$ and $m$  be arbitrary nonnegative integers 
such that $0\le l\le N-1$ and $1\le m\le N$.
Let $\Phi(l,N) \in {\mathcal M}$ be a multiset defined as 
   $$
\Phi(l,N)=\{e^{-j2nl\pi/N}:\, n=1,2,\ldots,N\}.\leqno(12)
   $$
Define the discrete complex-valued random variable $X_l(m,N)=X_l(m)$ as
  \begin{eqnarray*}
 && \mathrm{Prob}\left(X_l(m)
 = \sum_{i=1}^me^{-j2n_il\pi/N}\right)\\
(13) &= &\frac{1}{{N\choose m}}\cdot \big|\{\{t_1,t_2,\ldots,t_m\}
\subset\{1,2,\ldots,N\}: 
\sum_{i=1}^me^{-j2t_il\pi/N}=\sum_{i=1}^me^{-j2n_il\pi/N} \big|
\quad\qquad\\
& =&:\frac{q(n_1,n_2,\ldots, n_m)}{{N\choose m}},
  \end{eqnarray*}
  where $\{n_1,n_2,\ldots, n_m\}$ is an arbitrary fixed
 subset of $\{1,2,\ldots,N\}$ such that $1\le n_1<n_2<\cdots <n_m\le N$;
moreover, $q(n_1,n_2,\ldots, n_m)$ is the cardinality of a collection
 of all subsets
$\{t_1,t_2,\ldots,t_m\}$ of the set $\{1,2,\ldots,N\}$ such that 
$\sum_{i=1}^me^{-j2t_il\pi/N}=\sum_{i=1}^me^{-j2n_il\pi/N}$.   
  \end{definition}

% Rem 1.3
   \begin{remark}
The above definition is correct  in view of the fact that there are 
${N\choose m}$ index sets $T\subset \{1,2,\ldots,N \}$ with 
$m$ elements. Notice also that this quantity grows (in some sense)
exponentially with $m$ and $N$. For a sake  of understanding this definition,
see Examples 1.4 and 1.5 given below. Moreover, a very short, but not 
strongly exact version of Definition 1.2 is given as follows.
   \end{remark}
\vspace{1mm}

% Def . 1.2'
  \noindent {\bf Definition 1.2'.}
Let $N$, $l$ and $m$  be arbitrary nonnegative integers 
such that $0\le l\le N-1$ and $1\le m\le N$.
Let $\Phi(l,N) \in {\mathcal M}$ be a multiset defined as 
   $$
\Phi(l,N)=\{e^{-j2nl\pi/N}:\, n=1,2,\ldots,N\}.
   $$
Choose a random subset  $S$ of size $m$ (the so-called $m$-element 
subset) without replacement from the set 
$\{1,2,\ldots, N\}$. Then the 
 complex-valued discrete random variable $X_l(m,N)=X_l(m)$ is defined as a sum
  $$
X_l(m)=\sum_{n\in S}e^{-j2nl\pi/N}.
  $$

\vspace{1mm}

% EX 1.5
\begin{example} Consider the multiset 
 $$
\Phi(2,6)=\{e^{-j2n\pi/3}:\, n=1,2,3,4,5,6\}.
  $$
If for brevity we put $\varepsilon = e^{-2j\pi/3}=(-1-j\sqrt{3})/2$, then 
obviously $\Phi(2,6)$ can be written as
 $$
\Phi(2,6)=\{\varepsilon,\varepsilon,\varepsilon^2,\varepsilon^2,1,1\}.
  $$
Then accordingly to Definition 1.2,  $X_1(1)$
   is the uniform  random variable with
   $$ 
\mathrm{Prob}\left(X_1(1)=\varepsilon\right)=
\mathrm{Prob}\left(X_1(1)=\varepsilon^2\right)=
\mathrm{Prob}\left(X_1(1)=1\right)=\frac{2}{6}=\frac{1}{3}.
  $$
If we put $X_1(1)=U+jV$, where $U$ is the {\it real part} 
 and $V$ is the {\it imaginary part} of $X_1(1)$, then since 
$\varepsilon^2 = e^{-4j\pi/3}=(-1+j\sqrt{3})/2$,
 a routine calculation 
gives the following probability laws of $U$ and $V$:
  $$ 
\mathrm{Prob}(U=1)=\frac{1}{3},  
\mathrm{Prob}\left(U = -\frac{1}{2}\right)=\frac{2}{3}; 
  $$
  $$ 
\mathrm{Prob}(V=0)=\mathrm{Prob}\left(V =\frac{\sqrt{3}}{2}\right)  =
 \mathrm{Prob}\left(V =-\frac{\sqrt{3}}{2}\right)=\frac{1}{3}. 
  $$
From the above two distribution laws, we immediately obtain the following 
   probability laws of $U^2$, $V^2$ and $UV$:
  $$ 
\mathrm{Prob}(U^2=1)=\frac{1}{3},  
\mathrm{Prob}\left(U^2 = \frac{1}{4}\right)=\frac{2}{3},
  $$
  $$ 
\mathrm{Prob}(V^2=0)=\frac{1}{3},  
\mathrm{Prob}\left(V^2 = \frac{3}{4}\right)=\frac{2}{3},
  $$
and 
  $$ 
\mathrm{Prob}(UV=0)=\frac{1}{3},  
\mathrm{Prob}\left(UV = \frac{\sqrt{3}}{2}\right)=\frac{1}{9},
  \mathrm{Prob}\left(UV = -\frac{\sqrt{3}}{2}\right)=\frac{1}{9},
   $$
  $$
\mathrm{Prob}\left(UV = \frac{\sqrt{3}}{4}\right)=\frac{2}{9},
  \mathrm{Prob}\left(UV = -\frac{\sqrt{3}}{4}\right)=\frac{2}{9}.
  $$
 
Generally, if  $X=U+jV$ is a complex-valued random variable, 
then the expected value of its square is defined as  
   $$
\Bbb E[X^2]=\Bbb E[U^2]+\Bbb E[V^2]-2j\Bbb E[UV].\leqno(14)
  $$
This expression together with above derived probability law implies that
   $$
\Bbb E[(X_1(1))^2]=\left(\frac 13+\frac 16\right)+\frac 12-2j\cdot 0=1.
   $$
 \end{example}

% EX 1.5
\begin{example} Consider the set 
 $$
\Phi(1,4)=\{e^{-jn\pi/2}:\, n=1,2,3,4\}.
   $$
Since $e^{-j\pi/2}=-j$, we have
    $$ 
\Phi(1,4)=\{1,-1,j,-j\}.\leqno(15)
  $$
Then accordingly to Definition 1.2, the probability law of  $X_1(2)$
   is given by  
   \begin{equation*}\begin{split}
\mathrm{Prob}(X_1(2)=0)&=\frac{1}{3},   
\mathrm{Prob}(X_1(2)=1+j)=\mathrm{Prob}(X_1(2)=-1+j)\\
&=\mathrm{Prob}(X_1(2)=-1-j)=\mathrm{Prob}(X_1(2)=1-j)=\frac{1}{6}.
  \end{split}\end{equation*}
If we set $X_1(2)=U+jV$, where $U$ is the real part 
 and $V$ is the imaginary part of $X_1(2)$, then  a simple calculation 
implies that both random variables $U$ and $V$ are uniformly 
distributed, i.e.,
  $$ 
\mathrm{Prob}(U=0)=\mathrm{Prob}(U=1)=\mathrm{Prob}(U=-1)=\frac{1}{3},
  $$
$$ 
\mathrm{Prob}(V=0)=\mathrm{Prob}(V=1)=\mathrm{Prob}(V=-1)=\frac{1}{3}.
  $$
The random variable $(X_1(2))^2$ is also uniformly distributed; namely, 
  $$
\mathrm{Prob}((X_1(2))^2=0)=\mathrm{Prob}((X_1(2))^2=2j)=
\mathrm{Prob}((X_1(2))^2=-2j)=\frac{1}{3}.
  $$
Moreover, the distribution laws of  $(X_1(2))^3$ and
$(X_1(2))^4$ are respectively given as follows:
    \begin{equation*}\begin{split}
\mathrm{Prob}((X_1(2))^3=0)&=\frac{1}{3},   
\mathrm{Prob}((X_1(2))^3=2+2j)=\mathrm{Prob}(X_1(2)=-2+2j)\\
&=\mathrm{Prob}((X_1(2))^3=-2-2j)=\mathrm{Prob}(X_1(2)=2-2j)=\frac{1}{6},
  \end{split}\end{equation*}
  $$
\mathrm{Prob}((X_1(2))^4=0)= \frac{1}{3},\mathrm{Prob}((X_1(2))^4=4)=
\frac{2}{3}.
  $$
Notice that from the above described distributions it follows that
    $$
\Bbb E [X_1(2)]=\Bbb E [(X_1(2))^2]=\Bbb E [(X_1(2))^3]=0
\quad {\rm and}\quad \Bbb E [(X_1(2))^4]=\frac{8}{3}.\leqno(16)
    $$
Moreover, since  
$$
\mathrm{Prob}(|X_1(2)|=0)= 1/3, \mathrm{Prob}(|X_1(2)|=\sqrt{2})= 2/3, 
\mathrm{Prob}(|X_1(2)|^2=0)= 1/3
  $$
and $\mathrm{Prob}(|X_1(2)|=2)= 2/3$, by definition, we obtain that 
the variance of $X_1(2)$ is equal to 
   $$
{\rm Var}[X_1(2)]=\Bbb E[|X_1(2)|^2] -|\Bbb E[X_1(2)]|^2=
\frac{4}{3}-0=\frac{4}{3}.
  $$ 

Furthermore,  using  (15), a routine calculation shows that $X_1(3)$ is the 
uniformly distributed random variable, i.e.,
    \begin{eqnarray*} 
\mathrm{Prob}(X_1(3)=1)&=&\mathrm{Prob}(X_1(3)=-1)=
\mathrm{Prob}(X_1(3)=j)\\
&=&\mathrm{Prob}(X_1(3)=-j)=\frac{1}{4},
  \end{eqnarray*}
whence we see that $X_1(3)$ and $X_1(1)$ are equally  
distributed random variables.
 \end{example}

Example 1.5 
addresses  the following curious question.

% Ques 1.7
\begin{question} 
Do there exist positive integers $N\ge 5$, $l$
 and $m$ such that $2\le m\le N-2$ and $1\le l\le N-1$ for 
which at least one of the following two assertions 
there holds:
   \begin{itemize}
\item[(i)] the real part $U_l(m,N)$ of the random variable $X_l(m,N)$
is uniformly distributed{\rm ;}
  \item[(ii)] 
the imaginary part $V_l(m,N)$ of the random variable $X_l(m,N)$
is uniformly distributed{\rm ?}
  \end{itemize}
 \end{question} 

Let us now briefly describe the organization of the paper. 
In Section 2 we give our main results followed by some remarks.
Some of these results are also proved or attributed in \cite{ssa}
and extended in \cite{m7}.
Three examples   and related two assertions concerning certain 
classes of the  random variables $X_l(m)$ are presented 
in  Section 3. As applications, some  combinatorial congruences
are proved. In the last section, we give proofs of the results
of Section 2. 

% Section 2
\section{The main results}

The following 
{\it antisymmetric property} of the random variables $X_l(m,N)$, 
$U_l(m,N)$ and $V_l(m,N)$  should  be 
useful for related computational purposes.

% Prop. 2.1
  \begin{proposition}
Let $N\ge 2$, $l$ and $m$  be  nonnegative integers 
such that $0\le l\le N-1$ and $1\le m\le N-1$. Then the random 
variables $X_l(m,N)$  and $-X_l(N-m,N)$ are equally distributed.
The same assertion holds for the random variables $U_l(m,N)$ and $V_l(m,N)$. 
   \end{proposition}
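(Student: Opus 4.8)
The plan is to exploit the complementation bijection on subsets together with the vanishing sum (4). Write $w=e^{-j2l\pi/N}$ as in (3), and recall from Definition 1.2$'$ that $X_l(m,N)=\sum_{n\in S}w^n$, where $S$ ranges uniformly over the $m$-element subsets of $\{1,2,\ldots,N\}$. The central observation is that for $l\neq 0$ equation (4) gives $\sum_{n=1}^N w^n=0$, so for any subset $S$ with complement $S^c=\{1,\ldots,N\}\setminus S$ one has
$$\sum_{n\in S}w^n=-\sum_{n\in S^c}w^n.$$
Since $S$ has $m$ elements exactly when $S^c$ has $N-m$ elements, the map $S\mapsto S^c$ is a bijection between the $m$-subsets and the $(N-m)$-subsets.

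First I would translate the assertion into a statement about the counting quantities $q(\cdot)$ of Definition 1.2. Fix a value $v$ in the range of $X_l(m,N)$ and let $q_m(v)$ denote the number of $m$-subsets $S$ with $\sum_{n\in S}w^n=v$; then $\mathrm{Prob}(X_l(m,N)=v)=q_m(v)/\binom{N}{m}$. By the displayed identity, an $m$-subset $S$ satisfies $\sum_{n\in S}w^n=v$ if and only if its complement (an $(N-m)$-subset) satisfies $\sum_{n\in S^c}w^n=-v$. Hence $S\mapsto S^c$ restricts to a bijection between the two counted families, giving $q_m(v)=q_{N-m}(-v)$. Since $\binom{N}{m}=\binom{N}{N-m}$, this yields
$$\mathrm{Prob}(X_l(m,N)=v)=\frac{q_m(v)}{\binom{N}{m}}=\frac{q_{N-m}(-v)}{\binom{N}{N-m}}=\mathrm{Prob}\big(X_l(N-m,N)=-v\big),$$
and the right-hand side equals $\mathrm{Prob}\big({-X_l(N-m,N)}=v\big)$, which is exactly the claimed equality of distributions.

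For the real and imaginary parts the very same bijection does the work. For each fixed $m$-subset $S$, taking real parts in the displayed identity shows that the value of $U_l(m,N)$ at $S$ is the negative of the value of $U_l(N-m,N)$ at $S^c$, and taking imaginary parts does the same for $V_l(m,N)$ and $V_l(N-m,N)$. Thus the measure-preserving bijection $S\mapsto S^c$ carries the law of $U_l(m,N)$ to that of $-U_l(N-m,N)$ and the law of $V_l(m,N)$ to that of $-V_l(N-m,N)$, which is the second assertion. (Alternatively, equality in distribution of the complex variables $X_l(m,N)$ and $-X_l(N-m,N)$ already forces equality in distribution of their real and imaginary parts.)

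I do not expect a serious obstacle, since the argument is a clean bijection plus (4). The one point requiring care is the hypothesis on $l$: identity (4) fails for $l=0$, where $\Phi(0,N)$ is the all-ones multiset and $X_0(m,N)\equiv m$ is deterministic, so the real-part assertion is false for $l=0$. I would therefore restrict to $1\le l\le N-1$ (or dispose of $l=0$ separately as a degenerate case). One should also confirm that ``equally distributed'' is read as equality of the full law of the pair $(U_l,V_l)$, so that the marginal statements for $U_l$ and $V_l$ follow without any extra hypothesis.
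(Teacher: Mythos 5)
Your proof is correct and is essentially the argument the paper intends: the paper's own proof is a one-line citation of Definition 1.2 and the vanishing-sum identity (37), which is precisely the complementation bijection $S\mapsto S^c$ that you spell out in full. Your side remark that the statement fails for $l=0$ (where $X_0(m,N)\equiv m$ while $-X_0(N-m,N)\equiv m-N$) is a genuine and worthwhile correction to the hypothesis ``$0\le l\le N-1$'' as printed, consistent with the fact that the cited Lemma 4.1 itself requires $1\le l\le N-1$.
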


Since $X_l(1,N)$ is the uniform random variable with
$\mathrm{Prob}\left(X_l(1,N)=e^{-j2il\pi/N}\right)=1/N$
for every $l\in\{0,1,\ldots,N-1\}$, it follows from Proposition 2.1
that   $X_l(N-1,N)$ is also the uniform random variable with
$\mathrm{Prob}\left(X_l(N-1,N)=-e^{-j2il\pi/N}\right)=1/N$
for every $l\in\{0,1,\ldots,N-1\}$.

Let $X_l(m,N)=U_l(m,N)+jV_l(m,N)$ be a  random variable from Definition
1.2, where $U_l(m,N)$ and $V_l(m,N)$ be its real and imaginary part,
respectively.   Since obviously, the set $\Phi(l,N)$ given by (3)
can also be expressed in the form
   $$
\Phi(l,N)=\{\overline{w},\overline{w^2},\ldots,\overline{w^N}\},
  $$
we immediately have the following result.

% Prop. 2.2
    \begin{proposition}
Let $N\ge 2$, $l$ and $m$  be  nonnegative integers 
such that $0\le l\le N-1$ and $1\le m\le N$. Then the
 imaginary part $V_l(m,N)$ of the  random variable $X_l(m,N)$
is symmetrically distributed around zero (i.e., around the mean of 
$V_l(m,N)$) in the sense that
for each value $x$ of $V_l(m,N)$ there holds
  $$
\mathrm{Prob}(V_l(m,N)=-x)=\mathrm{Prob}(V_l(m,N)=x).
  $$ 
     \end{proposition}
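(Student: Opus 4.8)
The plan is to exploit the invariance of the multiset $\Phi(l,N)$ under complex conjugation, which is exactly what the alternative representation $\Phi(l,N)=\{\overline{w},\overline{w^2},\ldots,\overline{w^N}\}$ recorded just above encodes. Concretely, I would first make this invariance into an explicit permutation of indices. Define $\sigma\colon\{1,2,\ldots,N\}\to\{1,2,\ldots,N\}$ by letting $\sigma(n)$ be the unique element of $\{1,2,\ldots,N\}$ congruent to $-n$ modulo $N$; explicitly $\sigma(n)=N-n$ for $1\le n\le N-1$ and $\sigma(N)=N$. Since $e^{-j2kl\pi/N}$ depends only on $k$ modulo $N$ (because $e^{-j2l\pi}=1$), one checks at once that
$$
e^{-j2\sigma(n)l\pi/N}=e^{+j2nl\pi/N}=\overline{e^{-j2nl\pi/N}},
$$
so that $\sigma$ realizes complex conjugation of the sample values. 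As $\sigma$ is an involution, it is in particular a bijection of the index set $\{1,2,\ldots,N\}$.

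Next I would lift $\sigma$ to the collection of all $m$-element subsets of $\{1,2,\ldots,N\}$ via $S\mapsto\sigma(S):=\{\sigma(n):n\in S\}$. Because $\sigma$ is a bijection, $S\mapsto\sigma(S)$ is a bijection of the family of all ${N\choose m}$ such subsets onto itself. Using Definition 1.2$'$ together with the conjugation identity above,
$$
\sum_{n\in\sigma(S)}e^{-j2nl\pi/N}=\sum_{n\in S}e^{-j2\sigma(n)l\pi/N}=\sum_{n\in S}\overline{e^{-j2nl\pi/N}}=\overline{\sum_{n\in S}e^{-j2nl\pi/N}},
$$
so the value of $X_l(m,N)$ on the subset $\sigma(S)$ is precisely the complex conjugate of its value on $S$.

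I would then transfer this from sample points to distributions. Since $S$ is chosen uniformly among the $m$-element subsets (each carrying probability $1/{N\choose m}$), the measure-preserving bijection $S\mapsto\sigma(S)$ shows that $X_l(m,N)$ and $\overline{X_l(m,N)}$ are identically distributed. Writing $X_l(m,N)=U_l(m,N)+jV_l(m,N)$, so that $\overline{X_l(m,N)}=U_l(m,N)-jV_l(m,N)$, it follows in particular that the imaginary parts $V_l(m,N)=\mathrm{Im}\,X_l(m,N)$ and $-V_l(m,N)$ are identically distributed. Evaluating at any value $x$ of $V_l(m,N)$ then gives $\mathrm{Prob}(V_l(m,N)=-x)=\mathrm{Prob}(-V_l(m,N)=-x)=\mathrm{Prob}(V_l(m,N)=x)$, which is the asserted symmetry around zero. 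The same argument with $U_l(m,N)$ replacing $V_l(m,N)$ is not claimed here and indeed would fail, since conjugation fixes the real part rather than negating it.

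The argument is short, and the only point requiring genuine care is the well-definedness and bijectivity of the index involution $\sigma$ — in particular the boundary value $n=N$, where one must note $-N\equiv N\pmod N$ so that $\sigma(N)=N$ — together with the verification that passing from subsets to their $\sigma$-images respects the uniform probability, so that the conjugation symmetry at the level of individual sample values genuinely transfers to equality of distributions. Once the invariance of $\Phi(l,N)$ under conjugation is pinned down as this explicit involution of indices, the remaining steps are routine bookkeeping, and the edge cases $l=0$, $m=N$ (where $V_l(m,N)$ is identically zero) are handled automatically.
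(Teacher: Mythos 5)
Your proof is correct and follows essentially the same route as the paper, which derives the proposition immediately from the conjugation-invariance $\Phi(l,N)=\{\overline{w},\overline{w^2},\ldots,\overline{w^N}\}$; you have simply made explicit the index involution $\sigma(n)\equiv -n\pmod N$ and the induced measure-preserving bijection on $m$-element subsets that the paper leaves implicit.
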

As an immediate consequence of Proposition 2.2, we obtain the 
following result.
% Cor. 2.3
 \begin{corollary}
Let $k$ be any  positive odd integer. Then the $k$th moment 
$\mu_k[V_l(m,N)]$ of the random variable $V_l(m,N)$ defined above
is equal to zero, that is, 
   $$
\mu_k[V_l(m,N)]:=\Bbb E[(V_l(m,N))^k]=0.\leqno(17)
   $$
  \end{corollary}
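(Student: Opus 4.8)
The plan is to derive Corollary 2.3 directly from the symmetry statement in Proposition 2.2. The key observation is that an odd power sends a symmetric distribution to one that is itself antisymmetric under $x\mapsto-x$, so the odd moment integrates (sums) to zero by pairing values with their negatives.

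First I would recall that $V_l(m,N)$ is a discrete random variable taking finitely many real values. Let $\mathcal{S}$ denote the (finite) set of values attained by $V_l(m,N)$. By Proposition 2.2, for every $x\in\mathcal{S}$ we have
\[
\mathrm{Prob}(V_l(m,N)=-x)=\mathrm{Prob}(V_l(m,N)=x),
\]
which in particular shows that $-x\in\mathcal{S}$ whenever $x\in\mathcal{S}$; that is, $\mathcal{S}$ is symmetric about the origin. Hence the values of $V_l(m,N)$ split into the singleton $\{0\}$ (if $0\in\mathcal{S}$) together with pairs $\{x,-x\}$ with $x\neq 0$.

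Next I would write the $k$th moment as a finite sum over $\mathcal{S}$,
\[
\mu_k[V_l(m,N)]=\sum_{x\in\mathcal{S}}x^k\,\mathrm{Prob}(V_l(m,N)=x),
\]
and group the terms according to the pairing above. For a pair $\{x,-x\}$ with $x\neq 0$, the two contributions are $x^k\,\mathrm{Prob}(V_l(m,N)=x)$ and $(-x)^k\,\mathrm{Prob}(V_l(m,N)=-x)$. Since $k$ is odd, $(-x)^k=-x^k$, and since the two probabilities are equal by Proposition 2.2, these two contributions cancel. The term for $x=0$ contributes $0^k\cdot\mathrm{Prob}(V_l(m,N)=0)=0$ as well. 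Summing over all pairs therefore gives $\mu_k[V_l(m,N)]=0$, which is exactly (17).

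I do not expect any genuine obstacle here: the result is an immediate consequence of the symmetry already established in Proposition 2.2, and the only point requiring care is the bookkeeping in the pairing argument (ensuring the $x=0$ term and the pairing of $x$ with $-x$ are handled without double counting). The oddness of $k$ is used in exactly one place, the identity $(-x)^k=-x^k$, and this is the crux that makes the cancellation work.
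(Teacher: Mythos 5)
Your argument is correct and is precisely the route the paper intends: the paper derives Corollary 2.3 as an ``immediate consequence'' of the symmetry in Proposition 2.2, and your pairing of each value $x$ with $-x$ together with $(-x)^k=-x^k$ for odd $k$ is exactly the cancellation being invoked. No gaps; your write-up simply makes explicit what the paper leaves unstated.
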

 In the next section we give  a direct combinatorial proof of  
the following expressions (for another proof of (18) and (19) see \cite{ssa}).  

% Theorem 2.4 
 \begin{theorem} 
Let $N\ge 2$, $l$ and $m$ be positive integers such that
 $1\le l\le N-1$ and $1\le m\le N$.  Then the expected value and the variance  
of the random variable $X_l(m,N)$ from Definition $1.2$ are respectively given
 by
  $$
\Bbb E[X_l(m,N)]=0,\leqno(18) 
   $$
and
   $$
{\rm Var}[X_l(m,N)]=\Bbb E[|X_l(m)|^2]=\frac{m(N-m)}{N-1}.\leqno(19)
   $$
If we put $X_l(m,N)= U_l(m,N)+jV_l(m,N)$, where 
$U_l(m,N)$ is the {\it real part} 
 and $V_l(m,N)$ is the {\it imaginary part} of $X_l(m,N)$, then
 $$
\Bbb E[U_l(m,N)]=\Bbb E[V_l(m,N)]=0.\leqno(20) 
   $$

If in addition, we suppose that $1\le l\le N-1$ and  $N\not= 2l$, then
  $$
\Bbb E[(U_l(m,N))^2]=\Bbb E[(V_l(m,N))^2]=\frac{m(N-m)}{2(N-1)}.
\leqno(21)
  $$
 \end{theorem}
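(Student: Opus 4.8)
The plan is to realize $X_l(m,N)$ as a linear statistic of a simple random sample drawn without replacement and to reduce everything to two elementary counting facts together with the root-of-unity identity (4). Writing $w=e^{-j2l\pi/N}$ as in (3) and introducing the indicator variables $\xi_n=\mathbf 1[n\in S]$ attached to the uniformly random $m$-subset $S\subset\{1,\dots,N\}$ underlying Definition 1.2, we have $X_l(m,N)=\sum_{n=1}^N\xi_nw^n$ with $\xi_n\in\{0,1\}$ and $\sum_n\xi_n=m$. Counting the $m$-subsets that contain one, respectively two, prescribed indices gives $\Bbb E[\xi_n]=\binom{N-1}{m-1}/\binom{N}{m}=m/N$ and, for $n\ne n'$, $\Bbb E[\xi_n\xi_{n'}]=\binom{N-2}{m-2}/\binom{N}{m}=m(m-1)/(N(N-1))$; these are the only distributional inputs I will need.

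For (18) I would simply apply linearity: $\Bbb E[X_l(m,N)]=(m/N)\sum_{n=1}^Nw^n=0$ directly by (4). For the second moment (19) I expand $\abs{X_l(m,N)}^2=\sum_{n,n'}\xi_n\xi_{n'}w^n\overline{w^{n'}}$ and split into diagonal and off-diagonal parts. On the diagonal $\xi_n^2=\xi_n$ and $\abs{w^n}^2=1$ contribute $\sum_n\Bbb E[\xi_n]=m$; off the diagonal the constant $m(m-1)/(N(N-1))$ factors out and I use $\sum_{n\ne n'}w^n\overline{w^{n'}}=\bigl|\sum_nw^n\bigr|^2-\sum_n\abs{w^n}^2=0-N=-N$ (again via (4)). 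Collecting terms yields $\Bbb E[\abs{X_l(m,N)}^2]=m-m(m-1)/(N-1)=m(N-m)/(N-1)$, and since $\Bbb E[X_l(m,N)]=0$ this equals the variance. Equation (20) is then immediate by taking real and imaginary parts of (18).

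The genuinely new point is (21), and here the hypothesis $N\ne2l$ enters decisively. Writing $X_l(m,N)=U+jV$, I note that $U^2+V^2=\abs{X_l(m,N)}^2$ while $U^2-V^2=\operatorname{Re}\bigl(X_l(m,N)^2\bigr)$, so it suffices to compute $\Bbb E[X_l(m,N)^2]$ and show it vanishes. Expanding as before gives $\Bbb E[X_l(m,N)^2]=(m/N)\sum_nw^{2n}+m(m-1)/(N(N-1))\sum_{n\ne n'}w^nw^{n'}$, where the relevant sum is $\sum_{n=1}^Nw^{2n}=\sum_{n=1}^N(e^{-j2(2l)\pi/N})^n$. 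The main obstacle is precisely recognizing that this geometric sum vanishes exactly when $w^2\ne1$, i.e.\ when $2l\not\equiv0\pmod N$; for $1\le l\le N-1$ the only value forcing $N\mid 2l$ is $2l=N$, which is ruled out by the assumption $N\ne2l$. Granting this, both sums vanish (the second because $\sum_{n\ne n'}w^nw^{n'}=(\sum_nw^n)^2-\sum_nw^{2n}=0$), whence $\Bbb E[X_l(m,N)^2]=0$ and therefore $\Bbb E[U^2]=\Bbb E[V^2]$. Combining this equality with $\Bbb E[U^2]+\Bbb E[V^2]=m(N-m)/(N-1)$ from (19) gives $\Bbb E[U^2]=\Bbb E[V^2]=m(N-m)/(2(N-1))$, which is (21).
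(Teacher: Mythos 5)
Your proof is correct. For (18)--(20) it is essentially the paper's argument in different packaging: the paper sums $w_{i_1}+\cdots+w_{i_m}$ (respectively the products $w_t\overline{w_s}$) over all $m$-subsets and counts that each single index occurs $\binom{N-1}{m-1}$ times and each ordered pair $\binom{N-2}{m-2}$ times, which is exactly your computation of $\Bbb E[\xi_n]=m/N$ and $\Bbb E[\xi_n\xi_{n'}]=m(m-1)/(N(N-1))$ followed by linearity; your normalization makes the final algebra cleaner (the paper instead recombines the two binomial coefficients via Pascal's rule), and it handles $m=1$ uniformly, which the paper treats as a separate case. For (21) you genuinely diverge. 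The paper computes $\Bbb E[(U_l(m,N))^2]$ directly by the same counting applied to $\cos(2kl\pi/N)$, using $\cos^2\theta=(1+\cos 2\theta)/2$ and the identity $\sum_{k=1}^N\cos(4kl\pi/N)=0$ of its Lemma 4.1 (this is where $N\ne 2l$ enters), and then recovers $\Bbb E[(V_l(m,N))^2]$ by subtracting from (19). You instead show that the algebraic second moment $\Bbb E[(X_l(m,N))^2]$ vanishes because $\sum_{n=1}^N w^{2n}=0$ when $w^2\ne 1$, deduce $\Bbb E[U^2]=\Bbb E[V^2]$ from $U^2-V^2=\operatorname{Re}(X^2)$, and combine with (19). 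The two routes rest on the same underlying fact --- $w^2\ne 1$ precisely because $2l\not\equiv 0\pmod N$, so the geometric sum of $w^{2n}$ vanishes --- but yours avoids the trigonometric manipulations entirely; moreover your identity $\Bbb E[(X_l(m,N))^2]=0$ is the $k=2$ instance of the paper's Theorem 2.8, and its imaginary part gives $\Bbb E[U_l(m,N)V_l(m,N)]=0$, i.e., the key step (46) in the paper's proof of Corollary 2.5, for free.
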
     

As  consequences of Theorem 2.4, we can easily obtain the following two 
results.

% Cor 2.5
 \begin{corollary}
Let $N\ge 2$, $l$ and $m$ be positive integers such that
 $1\le l\le N-1$ and $1\le m\le N$.
If we take $X_l(m,N)= U_l(m,N)+jV_l(m,N)$, then 
    \begin{equation*}\begin{split}
\Bbb E[(X_l(m,N))^2]&=\Bbb E[(U_l(m,N))^2]+\Bbb E[(V_l(m,N))^2]\\
(22)\qquad\qquad\qquad\qquad\qquad\qquad &={\rm Var}[X_l(m,N)]=
\frac{m(N-m)}{N-1},\qquad\qquad\qquad\qquad\qquad\qquad\quad
   \end{split}\end{equation*}
where $\Bbb E[(X_l(m,N))^2]$ is defined by $(14)$.
 \end{corollary}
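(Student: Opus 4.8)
The plan is to obtain the three equalities in $(22)$ directly from Definition $(14)$, the identity $|X_l(m,N)|^2=(U_l(m,N))^2+(V_l(m,N))^2$, and Theorem $2.4$. Abbreviating $U_l:=U_l(m,N)$ and $V_l:=V_l(m,N)$, formula $(14)$ reads
$$\Bbb E[(X_l(m,N))^2]=\Bbb E[U_l^2]+\Bbb E[V_l^2]-2j\,\Bbb E[U_lV_l],$$
so the first equality in $(22)$ is equivalent to the single assertion $\Bbb E[U_lV_l]=0$. Granting this, the middle expression equals $\Bbb E[U_l^2+V_l^2]=\Bbb E[|X_l(m,N)|^2]$; since $\Bbb E[X_l(m,N)]=0$ by $(18)$, we get ${\rm Var}[X_l(m,N)]=\Bbb E[|X_l(m,N)|^2]-|\Bbb E[X_l(m,N)]|^2=\Bbb E[|X_l(m,N)|^2]$, whose value is $m(N-m)/(N-1)$ by $(19)$. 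Thus the whole corollary reduces to showing $\Bbb E[U_lV_l]=0$.

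To prove $\Bbb E[U_lV_l]=0$ I would exploit the conjugation symmetry of $\Phi(l,N)$ that already underlies Proposition $2.2$. Since $w^N=e^{-j2l\pi}=1$ and $|w|=1$, the involution $\sigma$ of $\{1,2,\ldots,N\}$ given by $\sigma(n)=N-n$ for $1\le n\le N-1$ and $\sigma(N)=N$ satisfies $w^{\sigma(n)}=w^{-n}=\overline{w^n}$. Hence for every $m$-element subset $S$ one has $\sum_{n\in\sigma(S)}w^n=\overline{\sum_{n\in S}w^n}$, and because $\sigma$ permutes the $m$-element subsets of $\{1,\ldots,N\}$ while preserving the uniform law on them, the random variables $X_l(m,N)$ and $\overline{X_l(m,N)}=U_l-jV_l$ are equally distributed. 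Therefore the pairs $(U_l,V_l)$ and $(U_l,-V_l)$ have the same joint distribution, whence $\Bbb E[U_lV_l]=\Bbb E[U_l(-V_l)]=-\Bbb E[U_lV_l]$, forcing $\Bbb E[U_lV_l]=0$.

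Assembling these two steps proves all of $(22)$ at once and, crucially, under the corollary's hypotheses alone: no condition $N\ne 2l$ is required, because the argument never separates $\Bbb E[U_l^2]+\Bbb E[V_l^2]$ into its two (possibly unequal) summands but works throughout with $\Bbb E[|X_l(m,N)|^2]$. The only nonroutine step is the identity $\Bbb E[U_lV_l]=0$; the remainder is bookkeeping with $(14)$, $(18)$ and $(19)$. The point demanding care is that Proposition $2.2$ as stated yields merely the marginal symmetry of $V_l$—enough for its odd moments to vanish, but not by itself enough to annihilate the mixed expectation $\Bbb E[U_lV_l]$. What the proof really needs is the stronger joint symmetry $(U_l,V_l)\stackrel{d}{=}(U_l,-V_l)$, and this is exactly what the index involution $\sigma$ supplies.
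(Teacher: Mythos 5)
Your proof is correct, and it reaches the key identity $\Bbb E[U_l(m,N)V_l(m,N)]=0$ by a genuinely different route from the paper. Both arguments make the same reduction: by (14) the first equality in (22) is equivalent to $\Bbb E[U_lV_l]=0$, and the remaining equalities follow from $|X_l(m,N)|^2=U_l^2+V_l^2$ together with (18) and (19). For the key identity, the paper expands $\Bbb E[U_lV_l]$ as a sum over index subsets, counts how often each product $\cos\frac{2kl\pi}{N}\sin\frac{2sl\pi}{N}$ occurs, and kills the resulting sums with the product-to-sum formula and Lemma 4.1; you instead observe that the involution $\sigma(n)=N-n$ (with $\sigma(N)=N$) sends $w^n$ to $\overline{w^n}$, permutes the $m$-element index sets, and hence gives the joint distributional symmetry $(U_l,V_l)\stackrel{d}{=}(U_l,-V_l)$, from which $\Bbb E[U_lV_l]=-\Bbb E[U_lV_l]=0$. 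Your symmetry argument is shorter, avoids all occurrence-counting and trigonometric identities, and — as you rightly emphasize — it is the joint symmetry, not the marginal symmetry of $V_l$ recorded in Proposition 2.2, that is needed here; it also makes transparent why no hypothesis $N\ne 2l$ is required, since the value $m(N-m)/(N-1)$ is extracted from $\Bbb E[|X_l(m,N)|^2]$ via (19) rather than from the split formula (21). The paper's computational approach buys an explicit verification that generalizes to the mixed-moment calculations it performs elsewhere (and its citation of (21) in this proof is really only using the sum $\Bbb E[U_l^2]+\Bbb E[V_l^2]$, which your bookkeeping handles more cleanly).
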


% Cor 2.6
 \begin{corollary}
Let $N\ge 2$, $l$ and $m$ be positive integers such that
 $1\le l\le N-1$, $N\not= 2l$ and  $1\le m\le N$.
 If we put $X_l(m,N)= U_l(m,N)+jV_l(m,N)$, then 
    $$
{\rm Var}[U_l(m,N)]={\rm Var}[V_l(m,N)]=
\frac{m(N-m)}{2(N-1)},\leqno(23)
    $$
where ${\rm Var}[U_l(m,N)]$ and ${\rm Var}[V_l(m,N)]$
are the variances of $U_l(m,N)$ and $V_l(m,N)$, respectively.
 \end{corollary}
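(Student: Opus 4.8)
The plan is to derive this directly from Theorem 2.4 by invoking the elementary identity for the variance of a real-valued random variable. Both $U_l(m,N)$ and $V_l(m,N)$ are real-valued, being the real and imaginary parts of $X_l(m,N)$, so for each of them the identity
$$
\mathrm{Var}[W]=\Bbb E[W^2]-\big(\Bbb E[W]\big)^2
$$
applies, taken once with $W=U_l(m,N)$ and once with $W=V_l(m,N)$.

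First I would record that, by equation $(20)$ of Theorem 2.4, both expected values vanish:
$$
\Bbb E[U_l(m,N)]=\Bbb E[V_l(m,N)]=0.
$$
Consequently the subtracted square term in the variance identity is zero, so that $\mathrm{Var}[U_l(m,N)]=\Bbb E[(U_l(m,N))^2]$ and, in the same way, $\mathrm{Var}[V_l(m,N)]=\Bbb E[(V_l(m,N))^2]$.

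Next I would invoke equation $(21)$ of Theorem 2.4, which is exactly where the extra hypothesis $N\neq 2l$ enters; it gives
$$
\Bbb E[(U_l(m,N))^2]=\Bbb E[(V_l(m,N))^2]=\frac{m(N-m)}{2(N-1)}.
$$
Substituting these into the two simplified variance expressions from the previous step immediately yields the desired equality $(23)$.

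Since every ingredient is already supplied by Theorem 2.4, there is no genuine obstacle here; the only point demanding care is faithfully tracking the hypothesis $N\neq 2l$, which cannot be dropped because it is precisely the assumption under which the second-moment formula $(21)$ was established. Indeed, for $N=2l$ the sampled values $e^{-j2nl\pi/N}=(-1)^n$ are all real, so $V_l(m,N)$ is degenerate and the two coordinate second moments no longer coincide, which is why the corollary must explicitly exclude that case.
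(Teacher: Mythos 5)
Your proof is correct and follows essentially the same route as the paper: both deduce (23) from equations (20) and (21) of Theorem 2.4 together with the identity $\mathrm{Var}[W]=\Bbb E[W^2]-(\Bbb E[W])^2$ for real-valued $W$. Your closing remark correctly pinpoints why the hypothesis $N\neq 2l$ cannot be dropped, which the paper only notes separately in Remark 2.7.
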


% Rem 2.7
\begin{remark} Notice that the cases $l=0$ and $N=2l$ 
which are excluded from the above three assertions correspond to
the real-valued cases $X_0(m,N)$ and $X_l(m,2l)$  of the random variable 
$X_l(m,N)$ considered by Examples 3.1 and 3.2, respectively.
   \end{remark}
From the expression (19) we see that the value ${\rm Var}[X_l(m,N)]$
does not depend on $l$. We believe that this fact would be important 
and helpful for some further  investigations of  certain classes of the 
random variables $X_l(m,N)$ and related applications. 

Here we also extend the expression (18) of Theorem 2.4 as follows.

% Theorem 2.8 
\begin{theorem} 
Let $N$, $l$, $m$ and $k$  be positive integers such that
$1\le l\le N-1$ and $1\le m\le N$. If $k$ is not divisible by 
$N/\gcd(N,l)$  $($$\gcd(N,l)$ denotes  the greatest 
common divisor of $N$  and $l$),
then the $k$th moment $\mu_k$ of the  random variable $X_l(m,N)$ from 
Definition $1.2$ is equal to zero, i.e., 
   $$
\mu_k:=\Bbb E[(X_l(m,N))^k]=0.\leqno(24)
   $$
  \end{theorem}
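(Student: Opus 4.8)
The plan is to exploit a cyclic rotational symmetry of the index set $\{1,2,\ldots,N\}$ under which the random variable $X_l(m,N)$ gets multiplied by a fixed root of unity. Write $w=e^{-j2l\pi/N}$ as in $(3)$, so that $X_l(m)=\sum_{n\in S}w^n$, where $S$ is chosen uniformly at random among the $m$-element subsets of $\{1,2,\ldots,N\}$, as in Definition $1.2'$. The crucial number-theoretic fact is that $w$ is a primitive $d$-th root of unity, where $d:=N/\gcd(N,l)$: indeed $w^k=1$ if and only if $N\mid kl$, and writing $g=\gcd(N,l)$, $N=gd$, $l=gl'$ with $\gcd(d,l')=1$, this holds precisely when $d\mid k$.

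First I would introduce the cyclic shift $\sigma$ on $\{1,2,\ldots,N\}$ given by $\sigma(n)=n+1$ for $1\le n\le N-1$ and $\sigma(N)=1$. This is a bijection, and it therefore induces a bijection $S\mapsto\sigma(S)$ on the collection of all $m$-element subsets. For each such subset let $v(S)=\sum_{n\in S}w^n$ denote the corresponding value of $X_l(m)$. Since $w^N=1$, one checks that $w^{\sigma(n)}=w^{n+1}$ for every $n$ (the wrap-around term $n=N$ causing no trouble, since $w^{N+1}=w\cdot w^N=w$), and hence
  $$
v(\sigma(S))=\sum_{n\in S}w^{\sigma(n)}=\sum_{n\in S}w^{n+1}=w\sum_{n\in S}w^n=w\,v(S).
  $$

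Because $S\mapsto\sigma(S)$ is a bijection and Definition $1.2$ assigns the equal weight $1/\binom{N}{m}$ to every subset, the multiset of attained values $\{v(S)\}$ and the multiset $\{v(\sigma(S))\}=\{w\,v(S)\}$ coincide; that is, $X_l(m)$ and $w\,X_l(m)$ are equally distributed. Taking $k$-th powers and expectations then gives
  $$
\mu_k=\Bbb E[(X_l(m))^k]=\Bbb E[(w\,X_l(m))^k]=w^k\,\Bbb E[(X_l(m))^k]=w^k\mu_k,
  $$
so that $(1-w^k)\mu_k=0$. Since the hypothesis that $d$ does not divide $k$ yields $w^k\ne 1$ by the order computation above, we conclude $\mu_k=0$, which is $(24)$. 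As a consistency check, $1\le l\le N-1$ forces $d\ge 2$, so the case $k=1$ is always included, recovering $(18)$.

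The argument reduces to essentially one line once the symmetry is set up, so I do not expect a serious obstacle. The only steps demanding care are the verification that the cyclic shift preserves the uniform law on $m$-subsets, together with the handling of the wrap-around term via $w^N=1$, and the elementary identification of the order of $w$ as exactly $d=N/\gcd(N,l)$, since it is precisely this that converts the vanishing of $1-w^k$ into the stated divisibility condition on $k$.
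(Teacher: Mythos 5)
Your proof is correct and follows essentially the same route as the paper: both arguments rest on the observation that multiplication by $w$ permutes the (multi)set of attainable values of $X_l(m,N)$, so that $X_l(m,N)$ and $w\,X_l(m,N)$ are identically distributed, whence $(1-w^k)\mu_k=0$ and $w^k\ne 1$ forces $\mu_k=0$. Your version merely makes the underlying measure-preserving bijection explicit via the cyclic shift $\sigma$, which is a slight refinement of the paper's statement that the value set is invariant under multiplication by $w^k$.
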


%Remark 2.9
\begin{remark}
Notice that the equality (18) from Theorem 2.4 ia a particular case of the 
equality (24) with $k=1$. However, in Section 4, we give a direct proof of (18). 
  \end{remark}

In view of Theorem 2.8, it remains an open problem to calculate 
$\Bbb E[(X_l(m,N))^k]$ in the case when $k$ is  divisible by 
$N/\gcd(N,l)$. From (16) of Example 1.5 we see that generally,  in this case
$\Bbb E[(X_l(m,N))^k]\not= 0$.
However, we  are able to prove the following itself interesting result. 

% Prop. 2.10
\begin{proposition} 
  Let $N$, $l$, $m$ and $k$  be nonnegative integers such that
$0\le l\le N-1$, $1\le m\le N$ and $k\ge 1$.
Then the $k$th moment 
$\Bbb E[(X_l(m,N))^k]$ of the  random variable 
$X_l(m,N)$ is a real number.
\end{proposition}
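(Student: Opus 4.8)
The plan is to show that the complex-valued random variable $X_l(m,N)$ and its complex conjugate $\overline{X_l(m,N)}$ are identically distributed; once this is in hand, reality of every moment is immediate. The starting point is the observation that the multiset $\Phi(l,N)$ in (12) is invariant under complex conjugation. Indeed, for $n\in\{1,2,\ldots,N\}$ one has
$$
\overline{e^{-j2nl\pi/N}}=e^{j2nl\pi/N}=e^{-j2(N-n)l\pi/N},
$$
and since the map $n\mapsto N-n \pmod N$ (sending $n$ to $N-n$ for $1\le n\le N-1$ and fixing $n=N$) is a bijection of $\{1,2,\ldots,N\}$ onto itself, the assignment $z\mapsto\bar z$ merely permutes the elements of $\Phi(l,N)$. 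Equivalently, $\Phi(l,N)=\{\bar z:\ z\in\Phi(l,N)\}$ as multisets.

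Next I would transfer this symmetry to the level of $m$-element subsets. Writing $\sigma$ for the involution $\sigma(n)=N-n \pmod N$ on $\{1,\ldots,N\}$, the induced map $S\mapsto\sigma(S)$ is a bijection on the collection of $m$-element subsets of $\{1,\ldots,N\}$, hence it preserves the uniform distribution underlying Definition $1.2$. For any such subset $S$ one computes
$$
\sum_{n\in\sigma(S)}e^{-j2nl\pi/N}=\sum_{n\in S}e^{-j2(N-n)l\pi/N}
=\sum_{n\in S}\overline{e^{-j2nl\pi/N}}=\overline{\sum_{n\in S}e^{-j2nl\pi/N}},
$$
so evaluating $X_l(m,N)$ on $\sigma(S)$ yields precisely the conjugate of its value on $S$. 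Because $\sigma$ preserves the uniform law, it follows that $\mathrm{Prob}(X_l(m,N)=\bar v)=\mathrm{Prob}(X_l(m,N)=v)$ for every value $v$ in the range of $X_l(m,N)$; that is, $X_l(m,N)$ and $\overline{X_l(m,N)}$ have the same distribution.

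Finally, using that conjugation is a ring homomorphism and commutes with the expectation of a finitely supported complex random variable, I would conclude
$$
\overline{\Bbb E[(X_l(m,N))^k]}=\Bbb E\!\left[\overline{(X_l(m,N))^k}\right]
=\Bbb E[(\overline{X_l(m,N)})^k]=\Bbb E[(X_l(m,N))^k],
$$
where the last equality uses the equidistribution established above. A complex number equal to its own conjugate is real, which proves that $\Bbb E[(X_l(m,N))^k]$ is a real number. I expect the only point needing care to be the bookkeeping of the involution $\sigma$ at the fixed index $n=N$ (together with the degenerate case $l=0$, where $X_0(m,N)=m$ is manifestly real); the remainder of the argument is routine.
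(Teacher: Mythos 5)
Your proof is correct, but it follows a genuinely different route from the paper's. You exploit the fact that the multiset $\Phi(l,N)$ is stable under complex conjugation (via the involution $n\mapsto N-n \pmod N$ on indices), deduce that $X_l(m,N)$ and $\overline{X_l(m,N)}$ are identically distributed, and conclude $\Bbb E[(X_l(m,N))^k]=\overline{\Bbb E[(X_l(m,N))^k]}$. This is essentially the same symmetry the paper already records in Proposition 2.2 (symmetry of the imaginary part $V_l(m,N)$), upgraded to the joint statement $(U,V)\sim(U,-V)$, which is exactly what is needed; your bookkeeping of the involution, including the fixed point $n=N$ and the bijection on $m$-element subsets, is sound. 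The paper instead gives a constructive algebraic proof: it writes $\Bbb E[(X_l(m))^k]$ as a real-coefficient symmetric polynomial $P_k$ evaluated at $(w,w^2,\ldots,w^N)$, expresses $P_k$ via the fundamental theorem of symmetric polynomials as $Q_k(\sigma_1,\ldots,\sigma_N)$, and uses Newton's identities together with the explicit values of the power sums $p_n(w,\ldots,w^N)\in\{0,N\}$ to show by induction that each $\sigma_n(w,\ldots,w^N)$ is real. Your argument is shorter and more conceptual; the paper's machinery yields in addition an explicit representation $\Bbb E[(X_l(m))^k]=Q_k(a_1,\ldots,a_N)$ with computable real $a_i$, which is potentially useful for actually evaluating the higher moments left open after Theorem 2.8.
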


Notice that in Section 4 we give a constructive proof of Proposition 2.10 
which is based on Newton's identities (Newton-Girard formula).

%Remark 2.11
\begin{remark} 
Let ${\rm{\bf A}}$ be a $m\times n$ matrix over
the field $\Bbb C$ (or $\Bbb R$)  and let 
${\rm{\bf a_1}},\ldots, {\rm{\bf a_n}}\in \Bbb C^m$ (or $\in \Bbb R^m$) 
be its columns. Then the {\it coherence} 
of ${\rm{\bf A}}$ is the number $\mu({\rm{\bf A}})=\mu$ defined as
     $$
\mu=\max_{1\le i<j\le n}\frac{|\langle {\rm{\bf a_i}}, {\rm{\bf a_j}}
\rangle|}{\Vert {\rm{\bf a_i}}\Vert_2\cdot \Vert 
{\rm{\bf a_j}}\Vert_2}.
       $$
It was noticed in \cite[p. 159]{ss} (also see \cite{s2}) that  
the ratio $\frac{\sigma[X_l(m,N)]}{m}=\sqrt{\frac{N-m}{m(N-1)}}$ 
(where $\sigma[X_l(m,N)]=\sqrt{{\rm Var}[X_l(m,N)]}$
with ${\rm Var}[X_l(m,N)]$ given by (19)) is a crucial parameter 
({\it Welch bound} \cite{we} for coherence 
$\mu$ of  measurement matrix ${\rm{\bf A}}$) for corrected signal detection.
More precisely  (for a particularly 
elegant and very short proof of this bound see \cite{jmf}; also see 
\cite[Chapter 5, Theorem 5.7]{fr}), 
the
coherence $\mu$ of a matrix ${\rm{\bf A}}\in {\Bbb K}^{m\times N}$, where the 
field $\Bbb K$ can either be $\Bbb R$ or $\Bbb C$, with $l_2$-normalized 
columns satisfies the inequality
   $$
\mu\ge \sqrt{\frac{N-m}{m(N-1)}},
  $$  
which under above notation can be written as 
   $$
\mu \ge \frac{\sigma[X_l(m,N)]}{m}.
   $$
Equality in the above two inequalities holds if and only if the columns 
$\rm{\bf{a}}_1,\ldots ,\rm{\bf{a}}_N$ of the matrix ${\rm{\bf  A}}$
form an {\it equiangular tight frame}. Ideally, the coherence $\mu$
of a measurement matrix   ${\rm{\bf A}}$ should be small (see 
\cite[Chapter 5]{fr}).

 Let us observe  that if $m \ll N$, then this bound reduces to 
approximately $\mu({\rm{\bf A}})\ge 1/\sqrt{m}$. There is a lot of possible ways 
to construct matrices with small coherence. Not surprisingly, 
one possible option is to consider random matrices ${\rm{\bf A}}$ with 
each entry generated independently at random 
(cf. \cite[Chapter 11]{pi}). 
  \end{remark}

%Remark 2.12
\begin{remark}
Based on some recent results by R. Vershynin  on 
sub-Gaussian random variables (\cite{ver} and \cite{ver2}), 
  some new results concerning
the random variable $X_l(m,N)$ are obtained in \cite{m7}.
In particular, this our investigation is motivated by the 
fact that  {\it  Restricted Isometry Property} (RIP)
introduced in \cite{ct1} holds 
with high probability for any matrix generated by a sub-Gaussian random 
variable (see \cite{ct2} and \cite{rv}). 
      
Furthermore, in \cite{m1} it was generalized the random variable $X_l(m,N)$. 
It was  also derived the expression for 
related expected value and variance. By using these expressions,
  some  probabilistic aspects of 
compressive sensing are considered in the mentioned paper. In particular, 
motivated by 
the observation given in \cite[p. 159]{ss}, the connection between 
Welch bound on the coherence of a particular $m\times N$  matrix 
${\rm{\bf A}}$ over 
$\Bbb C$ and the variance of the  associated random variable
(defined in a suitable manner) was established in \cite{m1}.
  \end{remark}   

% Sec. 3

\section{Some particular cases of the random variables $X_l(m,N)$}

Here we consider some  particular cases of the  random variable 
$X_l(m,N)=X_l(m)$
from Definition 1.2 with different related values $N$, $m$ and $l$.
We believe that these examples will be of interest in future research 
related to the topics of this paper. Firstly, we consider
the  only two cases when $X_l(m)$ is a real-valued discrete random variable,  
or equivalently, when the multiset $\Phi(l,N)$ defined by (2) 
consists of real numbers.

% Exam. 3.1
\begin{example} For $l=0$ and an arbitrary positive integer $N\ge 1$,
the equation (2) yields
  $$
\Phi(0,N)=\{\underbrace{1,\ldots,1}_N\}.\leqno(25)
  $$ 
Then in view of Definition 1.2,
 for any fixed $m$ with $1\le m\le N$,  $X_0(m)$
   is the constant  random variable with
   $$ 
\mathrm{Prob}\left(X_0(m)=m\right)=1.\leqno(26)
  $$
\end{example}

% Exam. 3.2
\begin{example} 
Let $l$ and $N$ be positive integers such that $l/N=1/2$, i.e., $N=2l$.
Then the equation (2) yields
  $$
\Phi(l,2l)=\{\underbrace{1,\ldots,1}_l, 
\underbrace{-1,\ldots,-1}_l\}.\leqno(27)
  $$ 
Let  $m$ be any positive integer such that  $1\le m\le 2l$.
Notice that for each nonnegative integer $k$ with 
$\max \{0,m-l\}\le k\le \min\{m,l\}$  
 from the multiset $\Phi(l,2l)$ we can choose $k$ $1$'s by 
${l\choose k}$ manners and $(m-k)$ $-1$'s by 
${l\choose m-k}$ manners. Since the sum of 
sums of $k$ $1$'s and sums of  $m-k$ $-1$'s is equal to $2k-m$,
it follows that the distribution of the random variable $X_l(m)$ from 
Definition 1.2 is  given by
  $$
\mathrm{Prob}\left(X_l(m)=2k-m\right)=
\frac{{l\choose k}{l\choose m-k}}{{2l\choose m}}
\,\,\textrm{for\,\, each}\,\, k=\max \{0,m-l\},\ldots,\min\{m,l\}.\leqno(28)
  $$      
In particular, if $m=l$, then  (28) yields
 $$
\mathrm{Prob}\left(X_l(l)=2k-l\right)=
\frac{{l\choose k}^2}{{2l\choose l}},
\quad\textrm{for\,\, each}\quad k=0,1,\ldots,l.\leqno(29)
  $$ 
Notice that the distribution given by (28) 
  implies the following special case of one of the most useful identities 
among binomial 
coefficients,   well known  as the {\it Chu-Vandermonde identity} in 
Combinatorics and  Combinatorial Numbwr Theory (see, e.g., 
\cite{ri}):
   $$
\sum_{k=0}^{\min\{m,l\}}{l\choose k}{l\choose m-k}=
{2l\choose m},
   $$
whose special case for $m=l$ is given as 
   $$
\sum_{k=0}^l{l\choose k}^2={2l\choose l}.
   $$
Furthermore, from (29) it follows that the expected value of
$X_l(m)$ is equal to 
    \begin{equation*}\begin{split}
\Bbb E[X_l(m)]=& \sum_{k= \max \{0,m-l\}}^{\min\{m,l\}}(2k-m)
  \frac{{l\choose k}{l\choose m-k}}{{2l\choose m}}\\
= & \frac{1}{{2l\choose m}}\cdot\sum_{k= \max \{0,m-l\}}^{\min\{m,l\}}
(2k-m){l\choose k}{l\choose m-k}=\,\,({\rm substitution}\,\, k=m-t)\\
=& -\frac{1}{{2l\choose m}}\cdot (2t-m)\sum_{t= \max \{0,m-l\}}^{\min\{m,l\}}
{l\choose m-t}{l\choose t}\\
=&-\Bbb E[X_l(m)],
    \end{split}\end{equation*}
whence it follows that 
       $$
\Bbb E[X_l(m)]=0.\leqno(30)
        $$
Notice that from  (19) of Theorem 2.4 we obtain  that the variance of
$X_l(m)$ is equal to 
    $$
{\rm Var}[X_l(m)]=\frac{m(2l-m)}{2l-1}.\leqno(31)
    $$ 
On the other hand, by using (29) and (30), we find that
    \begin{equation*}\begin{split}
{\rm Var}[X_l(m)]=&\Bbb E[(X_l(m))^2]  - (\Bbb E[X_l(m)])^2\\
(32)\qquad\qquad\qquad\qquad=&\Bbb E[(X_l(m))^2]= \sum_{k= 
\max \{0,m-l\}}^{\min\{m,l\}}(2k-m)^2
  \frac{{l\choose k}{l\choose m-k}}{{2l\choose m}}\qquad\qquad\qquad\qquad\\
=& \frac{1}{{2l\choose m}}\cdot\sum_{k= \max \{0,m-l\}}^{\min\{m,l\}}
(2k-m)^2{l\choose k}{l\choose m-k}.
       \end{split}\end{equation*}
By comparing the equalities (31) and (32), we obtain the following 
combinatorial identity: 
    $$
\sum_{k= \max \{0,m-l\}}^{\min\{m,l\}}
(2k-m)^2{l\choose k}{l\choose m-k}= \frac{m(2l-m)}{(2l-1)}{2l\choose m}
\,\, {\rm with}\,\, 1\le m\le 2l.\leqno(33)
   $$
If we take $l=m$ into (33), then it becomes 
   $$
\sum_{k=0}^m(2k-m)^2{m\choose k}^2=\frac{m^2}{2m-1}{2m\choose m},
   $$
whence by using the identity 
$\frac{m^2}{2m-1}{2m\choose m}=2m{2m-2\choose m-1}$, we get
the following curious combinatorial identity.
\end{example}

% Ident. 3.3
\begin{identity}
Let $m$ be an arbitrary positive integer. Then
   $$
\sum_{k=0}^m(2k-m)^2{m\choose k}^2=2m{2m-2\choose m-1}.
   $$ 
\end{identity}

% Rem. 3.4
\begin{remark} 
Smilarly as in Example 3.2, we can obtain several combinatorial identities. 
For example, 
by determining directly the variance ${\rm Var}[U_l(m,3l)]$ associated to the 
multiset      $\Phi(l,3l)=\{\underbrace{1,\ldots,1}_l,
\underbrace{-1/2,\ldots,-1/2}_{2l}\}$ ($1\le m\le 3l$)  
and  using the expression (19) of Theorem 2.4, we arrive at the 
following identity:
   $$
\sum_{k=\max\{0,m-l\}}^{\min\{m,2l\}}(2m-3k)^2
{l\choose m-k}{2l\choose k}=\frac{2m(3l-m)}{3l-1}{3l\choose m}.
  $$
In particular, for $l=m$ the above congruence becomes 
  $$
\sum_{k=0}^m(2m-3k)^2
{m\choose k}{2m\choose k}=\frac{4m^2}{3m-1}{3m\choose m}.
  $$

Similarly, by determining directly the variance ${\rm Var}[U_l(m,6l)]$ 
associated to the multiset   $\Phi(l,6l)=\{\underbrace{1,\ldots,1}_l,
\underbrace{-1,\ldots,-1}_l, \underbrace{1/2,\ldots,1/2}_{2l},
\underbrace{-1/2,\ldots,-1/2}_{2l}\}$ ($1\le m\le 6l$), we obtain 
the following identity:
  $$
\sum_{\sum_{i=1}^4m_i=m}(2m_1-2m_2+m_3-m_4)^2{l\choose m_1}{l\choose m_2}
{2l\choose m_3}{2l\choose m_4}=\frac{2m(6l-m)}{(6l-1)}{6l\choose m},
  $$
where the summation ranges over all nonnegative integers 
$m_i$  ($i=1,2,3,4$) such that  $\sum_{i=1}^4m_i=m$. 
  \end{remark}

% Exam. 3.5
\begin{example}
Let  $N=p$ be any prime number   and let $m$ be a positive integer
 such that $1\le m\le p$.
Then for $l=1$, consider the set consisting of all $p$th roots of the unity.
Then if we put  $\varepsilon = e^{-2j\pi/p}$, we have
 $$
\Phi(1,p)=\{1,\varepsilon,\varepsilon^2,\ldots,\varepsilon^{p-1}\}.
   $$
Now we will prove that the random variable 
$X_1(m,p)$ from Definition 1.2
is the uniform random variable whose distribution is given by
     $$
\mathrm{Prob}\left(X_1(m,p)= 
\varepsilon^{n_1}+ \varepsilon^{n_2}+\cdots+\varepsilon^{n_m}\right)=
     \frac{1}{{p\choose m}},\leqno(34)
     $$ 
where $\{n_1,n_2,\ldots, n_m\}$ is any subset of $\{0,1,2,\ldots,p-1\}$
 such that $0\le n_1<n_2<\cdots <n_m\le p-1$.
In order to show this fact, for the sake of completeness, 
we will prove the known 
fact in Number Theory that for every prime number $p$ the polynomial
$P_{p-1}(x)$ defined as     
   $$
P_{p-1}(x)=1+x+\cdots+x^{p-1}, \,\, x\in\Bbb R,\leqno(35)
  $$
is an {\it irreducible polynomial} of degree $p-1$ over the field 
$\Bbb Q$ of rational numbers (or equivalently, in the {\it ring} 
$\Bbb Z[x]$ of polynomials with integer coefficients). Namely, since $P_{p-1}(x)=(x^p-1)/(x-1)$ for each $x\not=1$,
then by replacing $x-1=y$, i.e., $x=y+1$, and using the binomisl 
expansion, we find that for each $y\not=0$ there holds
  \begin{equation*}\begin{split}  
P_{p-1}(x)&=P_{p-1}(y+1)=\frac{(y+1)^p-1}{y}\\
&=\frac{\sum_{k=1}^{p}{p\choose k}y^k}{y}=
\sum_{k=1}^{p}{p\choose k}y^{k-1}=y^{p-1}+
\sum_{k=1}^{p-1}{p\choose k}y^{k-1}.  
    \end{split}\end{equation*}
Applying the well known classical  {\it Eisenstein's irreducibility 
criterion} \cite{e} from Number Theory to the above 
expression for the polynomial $P_{p-1}(x)$, and using the fact that 
by {\it Kummer's theorem} (see, e.g., \cite[Section 2, page 6]{m3}),
the binomial coefficient ${p\choose k}$ is divisible by a prime 
$p$ for every $k=1,2,\ldots,p-1$, it follows that $P_{p-1}(x)$
is an irreducible polynomial over the field $\Bbb Q$ of rational numbers.
Hence,  the polynomial  $P_{p-1}(x)$ given by (35) 
is the {\it minimal polynomial} of its root 
$\varepsilon = e^{-2j\pi/p}$ over 
the field $\Bbb Q$ of rational numbers.

 Now if we suppose  that  for some
two distinct subsets $\{n_1,n_2,\ldots, n_m\}$ and 
 
\noindent $\{t_1,t_2,\ldots, t_m\}$
of the  set $\{0,1,2,\ldots,p-1\}$ there holds    
   $$
\varepsilon^{n_1}+ \varepsilon^{n_2}+\cdots+\varepsilon^{n_m}=
\varepsilon^{t_1}+ \varepsilon^{t_2}+\cdots+\varepsilon^{t_m},
   $$
then obviously, the above equality can be reduced to  the form
  $$
\sum_{i=0}^{p-1}\alpha_i\varepsilon^i=0,\leqno(36)
 $$
where the coefficients $\alpha_i\in\{0,-1,1\}$, at least two 
$\alpha_i\not=0$ for some $i\in\{0,1,2,\ldots,p-1\}$
and at least one $\alpha_k=1$ for some $k\in\{1,2,\ldots,p-1\}$.
Therefore, in view of the above  fact that the  polynomial $P_{p-1}(x)$ 
defined by (35) is the minimal polynomial of $\varepsilon$ over 
the field $\Bbb Q$, we conclude that  the expression on the left hand side of 
(36) is $\not=0$. A contradiction, and thus for all two distinct subsets 
$\{n_1,n_2,\ldots, n_m\}$ and $\{t_1,t_2,\ldots, t_m\}$
of the  set $\{0,1,2,\ldots,p-1\}$ there holds    
   $$
\varepsilon^{n_1}+ \varepsilon^{n_2}+\cdots+\varepsilon^{n_m}\not=
\varepsilon^{t_1}+ \varepsilon^{t_2}+\cdots+\varepsilon^{t_m}.
   $$  
This shows that for every prime number $p$, $X_1(m,p)$ is the uniform random 
variable with distribution given  by (34) and its range consists of 
${p\choose m}$ elements.  

 If $l$ is any positive integer such that 
$l\le p-1$, then in view of the fact that $N=p$ is a prime number,
we have 
   $$
\Phi(l,p)=\Phi(1,p)=
\{1,\varepsilon,\varepsilon^2,\ldots,\varepsilon^{p-1}\}.
   $$
This together with the result proved above yields the following assertion.
        \end{example}   

% Claim 3.6
     \begin{claim}
 Let $N=p$ be any prime number  and let $m$ be a positive integer
 such that $1\le m\le p$. Then $X_1(m),X_2(m),\ldots,X_{p-1}(m)$
are equally distributed uniform random variables whose  distribution law
is given by $(34)$.
     \end{claim}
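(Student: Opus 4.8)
The plan is to deduce the claim as a direct corollary of Example 3.5 by transporting the uniform law established there for $X_1(m,p)$ onto every $X_l(m,p)$ with $1\le l\le p-1$. The arithmetic input is that, since $p$ is prime and $1\le l\le p-1$, we have $\gcd(l,p)=1$, so multiplication by $l$ modulo $p$ permutes $\{0,1,\ldots,p-1\}$. As already recorded in Example 3.5, this makes the map $\phi_l\colon\{1,\ldots,p\}\to\{1,\varepsilon,\varepsilon^2,\ldots,\varepsilon^{p-1}\}$ given by $\phi_l(n)=e^{-j2nl\pi/p}$ a bijection onto the full set of $p$th roots of unity, and in particular $\Phi(l,p)=\Phi(1,p)$.

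The key step is to compare the two random variables through their sampling description in Definition 1.2'. Writing $\tau=\phi_1^{-1}\circ\phi_l$ for the induced permutation of the index set $\{1,\ldots,p\}$, we have $\phi_l=\phi_1\circ\tau$, so for a uniformly random $m$-element subset $S$ of $\{1,\ldots,p\}$,
\[
X_l(m,p)=\sum_{n\in S}\phi_l(n)=\sum_{n\in S}\phi_1(\tau(n))=\sum_{n'\in\tau(S)}\phi_1(n').
\]
Since $\tau$ is a bijection of $\{1,\ldots,p\}$, the image $\tau(S)$ is again a uniformly random $m$-element subset whenever $S$ is; hence $X_l(m,p)$ and $X_1(m,p)=\sum_{n'\in S'}\phi_1(n')$ have exactly the same law. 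In other words, relabelling the indices by the permutation $\tau$ preserves the uniform measure on $m$-subsets, and therefore $X_1(m),X_2(m),\ldots,X_{p-1}(m)$ are all equally distributed.

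It then remains only to invoke Example 3.5, where the minimal-polynomial (Eisenstein irreducibility) argument established that the ${p\choose m}$ sums $\varepsilon^{n_1}+\cdots+\varepsilon^{n_m}$ over distinct $m$-subsets of $\{0,1,\ldots,p-1\}$ are pairwise distinct, so that $X_1(m,p)$ is the uniform random variable with the distribution $(34)$. Because equal distribution transfers the entire law, each $X_l(m,p)$ automatically takes the same ${p\choose m}$ distinct values, each with probability $1/{p\choose m}$, which is precisely the assertion. I do not expect a genuine obstacle here, as the statement is essentially a corollary of Example 3.5; the only point requiring care is the verification that relabelling indices by $\tau$ preserves the uniform measure on $m$-subsets, and this is immediate since $\tau$ is a bijection of the finite index set, so no re-run of the irreducibility argument is needed for $l\neq 1$.
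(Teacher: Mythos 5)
Your proposal is correct and follows essentially the same route as the paper: the paper also deduces the claim from Example 3.5 by noting that primality of $p$ gives $\Phi(l,p)=\Phi(1,p)$ for every $1\le l\le p-1$, so each $X_l(m,p)$ inherits the uniform law $(34)$ already established for $X_1(m,p)$ via the Eisenstein irreducibility argument. Your explicit verification that the index permutation $\tau$ preserves the uniform measure on $m$-subsets merely spells out the step the paper leaves implicit.
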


For a given  prime number  $p$ and a nonnegative integer $k$
consider the measurement  row matrix ${\rm{\bf A}}$ (the basis function)  
defined by 
 $$
{\rm{\bf A}}=\left(e^{-2kj\pi/p},e^{-4kj\pi/p},
\ldots ,e^{-2pkj\pi/p}\right).
 $$ 
Let $k_0$  and   $m$
be  nonnegative integers such that $k_0\not= k$ and  $1\le m\le p$
(cf. \cite{ssa}). Let $\Psi_m$
denote  the subset of all vectors (signals) $x\in\Bbb C^p$ whose elements are 
$p$-tuples of the form 
 $$
\left(\delta_1 e^{2k_0j\pi/p},\delta_2 e^{4k_0j\pi/p},
\ldots , \delta_p e^{2pk_0j\pi/p}\right)^T,
 $$ 
where $\delta_s\in \{0,1\}$ for all $s=1,2,\ldots, p$ and 
$\sum_{s=1}^p\delta_s=m$. Notice that the condition 
$\sum_{s=1}^p\delta_s=m$ means  that every (column) vector
$x\in \Psi_m$ has exactly $m$ nonzero coordinates, so that
it is {\it $m$-sparse} vector.
   Then   
from considerations presented in Example 3.5 we immediately get the 
following assertion.
      
% Claim 3.7
     \begin{claim}
Let $y\in \Bbb C$ be a complex number such that under above 
notations and definitions, the equation $Ax=y$ has at least one solution
$x_0\in \Psi_m$. Then this solution is unique in the set 
$\Psi_m$. In other words, in this case the vector $x_0$ is the unique $m$-sparse 
solution of $Ax=y$ with $x_0\in \Psi_m$. 
     \end{claim}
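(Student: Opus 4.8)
The plan is to reduce this uniqueness statement directly to the injectivity established in Example 3.5. First I would compute the action of the row matrix ${\rm{\bf A}}$ on a generic vector $x\in\Psi_m$. Writing $x=(\delta_1 e^{2k_0j\pi/p},\delta_2 e^{4k_0j\pi/p},\ldots,\delta_p e^{2pk_0j\pi/p})^T$ with $\delta_s\in\{0,1\}$ and $\sum_{s=1}^p\delta_s=m$, and using that the $s$-th entry of ${\rm{\bf A}}$ equals $e^{-2skj\pi/p}$, the product collapses to
\[
{\rm{\bf A}}x=\sum_{s=1}^p\delta_s e^{2s(k_0-k)j\pi/p}=\sum_{s\in S}e^{-2s(k-k_0)j\pi/p},
\]
where $S=\{s:\delta_s=1\}$ is the support of $x$, an $m$-element subset of $\{1,2,\ldots,p\}$. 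Since all these exponentials depend on $k$ and $k_0$ only through their residues modulo $p$, I may assume $k,k_0\in\{0,1,\ldots,p-1\}$, so that $k_0\ne k$ yields a frequency $l\in\{1,2,\ldots,p-1\}$ with $l\equiv k-k_0\pmod p$. Because $e^{-2stj\pi}=1$ for every integer $t$, writing $k-k_0=l+tp$ gives $e^{-2s(k-k_0)j\pi/p}=e^{-j2sl\pi/p}$, and hence ${\rm{\bf A}}x=\sum_{s\in S}e^{-j2sl\pi/p}$. This is precisely the value taken by the random variable $X_l(m,p)$ on the $m$-subset $S$.

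With this identification in hand, the rest is immediate. Each vector $x\in\Psi_m$ is determined uniquely by its support $S$, so the restriction of $x\mapsto{\rm{\bf A}}x$ to $\Psi_m$ coincides, under this bijection, with the map sending an $m$-subset $S$ of $\{1,\ldots,p\}$ to $\sum_{n\in S}e^{-j2nl\pi/p}$. By Example 3.5 and Claim 3.6, for $1\le l\le p-1$ and $p$ prime this map is injective: distinct $m$-subsets give distinct sums, the range of $X_l(m,p)$ having exactly ${p\choose m}$ elements. Consequently, if $x_0,x_1\in\Psi_m$ both satisfy ${\rm{\bf A}}x=y$, then their supports $S_0,S_1$ obey $\sum_{s\in S_0}e^{-j2sl\pi/p}=y=\sum_{s\in S_1}e^{-j2sl\pi/p}$, forcing $S_0=S_1$ and therefore $x_0=x_1$. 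This is exactly the asserted uniqueness of the $m$-sparse solution in $\Psi_m$.

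The argument is short once the identification ${\rm{\bf A}}x=X_l(m,p)$ restricted to $S$ is secured, and the step I would treat most carefully is the passage from the hypothesis $k_0\ne k$ to the nonvanishing frequency $l\not\equiv 0\pmod p$ needed to invoke Example 3.5. This is where primality of $p$ and the irreducibility of $P_{p-1}(x)=1+x+\cdots+x^{p-1}$ over $\Bbb Q$ do the real work; if one instead allowed $l\equiv 0\pmod p$, every exponential would equal $1$, the value ${\rm{\bf A}}x$ would depend only on $m$, and uniqueness would fail outright. Thus the entire content of the claim is the translation of this linear-algebraic uniqueness question into the combinatorial injectivity already proved.
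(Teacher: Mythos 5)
Your proof is correct and is essentially the argument the paper intends: the paper offers no explicit proof beyond asserting that the claim follows ``from considerations presented in Example 3.5'', and your computation of ${\rm{\bf A}}x=\sum_{s\in S}e^{-j2sl\pi/p}$ followed by the injectivity of $S\mapsto\sum_{s\in S}\varepsilon^{ls}$ on $m$-subsets for prime $p$ is precisely that reduction. Your remark that the hypothesis $k_0\ne k$ must really be read as $k_0\not\equiv k\pmod p$ (otherwise every exponential equals $1$, ${\rm{\bf A}}x=m$ for all $x\in\Psi_m$, and uniqueness fails for $1\le m\le p-1$) is a legitimate sharpening of a point the paper glosses over.
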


Moreover, the exposition of Example 3.5 obviously yields the following result. 
% Claim 3.8
  \begin{claim}
 Under above notations and definitions, consider the equation 
$Ax=0$, where  $x\in\Psi:= \cup_{m=0}^{p-1}\Psi_m$
($\Psi_0$ denotes the zero vector $(0,0,\ldots,0)\in\Bbb C^p$).
Then the null space $\ker A:=\{x\in \Psi: Ax=0\}$ 
does not contain any  vector $x\in \Psi$ other than the zero 
vector. In other words, the matrix $A$ is injective as a map 
from $\Psi$ to $\Bbb C$. 
  \end{claim}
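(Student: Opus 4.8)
The plan is to reduce the equation $Ax=0$ to a vanishing subset sum of powers of a single primitive $p$th root of unity, and then to invoke the irreducibility argument already established in Example 3.5. First I would compute $Ax$ explicitly for a vector $x\in\Psi_m$. Writing $x=(\delta_1 e^{2k_0j\pi/p},\ldots,\delta_p e^{2pk_0j\pi/p})^T$ with $\delta_s\in\{0,1\}$ and $\sum_{s=1}^p\delta_s=m$, the $s$th term of the scalar product $Ax$ is $e^{-2skj\pi/p}\cdot\delta_s e^{2sk_0j\pi/p}=\delta_s w^s$, where I set $w:=e^{2(k_0-k)j\pi/p}$. Hence $Ax=\sum_{s\in S}w^s$, where $S=\{s:\delta_s=1\}\subseteq\{1,\ldots,p\}$ has exactly $m$ elements.

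The next step is to observe that, because $p$ is prime and $k_0\not\equiv k\pmod p$, the number $w$ is a primitive $p$th root of unity, so $\{w^s:s=1,\ldots,p\}=\{1,w,w^2,\ldots,w^{p-1}\}$ is precisely the full set of $p$th roots of unity (note $w^p=1=w^0$). Consequently the condition $Ax=0$ can be rewritten as $\sum_{i=0}^{p-1}c_iw^i=0$, where each $c_i\in\{0,1\}$ is the indicator of membership of the corresponding residue in $S$ (the index $s=p$ contributing to $c_0$), and $\sum_{i=0}^{p-1}c_i=m$ with $1\le m\le p-1$.

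I would then conclude exactly as in Example 3.5. The polynomial $Q(x):=\sum_{i=0}^{p-1}c_ix^i\in\Bbb Z[x]$ has degree at most $p-1$ and vanishes at $w$; since the minimal polynomial of $w$ over $\Bbb Q$ is $P_{p-1}(x)=1+x+\cdots+x^{p-1}$, which has degree $p-1$ and is irreducible (established in Example 3.5 via Eisenstein's criterion), $P_{p-1}$ must divide $Q$. As $\deg Q\le p-1=\deg P_{p-1}$, either $Q=0$ or $Q=\lambda P_{p-1}$ for some nonzero constant $\lambda$. Comparing coefficients forces all $c_i$ equal to a common value $\lambda\in\{0,1\}$, whence $m=\sum_i c_i=p\lambda\in\{0,p\}$. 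Both possibilities contradict $1\le m\le p-1$, so no nonzero $x\in\Psi$ can lie in $\ker A$; that is, $\ker A\cap\Psi=\{0\}$. The same distinctness argument applied to a difference of two indicator vectors (giving coefficients in $\{-1,0,1\}$, as in Example 3.5) yields the injectivity of $A$ restricted to each $\Psi_m$.

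The one point requiring care, and the only potential obstacle, is the hypothesis that $w$ is genuinely primitive, i.e.\ $k_0\not\equiv k\pmod p$. If instead $k_0\equiv k\pmod p$ (which is compatible with $k_0\ne k$ as integers), then $w=1$ and $Ax=\sum_{s\in S}1=m$, which vanishes only when $m=0$; thus the claim holds trivially in this degenerate case as well, and the two cases together cover all admissible $k,k_0$.
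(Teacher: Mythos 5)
Your proof is correct and follows essentially the same route as the paper, which simply defers to the minimal-polynomial/Eisenstein argument of Example 3.5: you reduce $Ax=0$ to a vanishing $\{0,1\}$-combination of the powers of a primitive $p$th root of unity and rule it out by comparing with $P_{p-1}(x)=1+x+\cdots+x^{p-1}$. Your explicit treatment of the degenerate case $k_0\equiv k\pmod p$ (where $w=1$) is a welcome extra precision that the paper's one-line justification glosses over.
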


Of course, the previously proved fact 
that $X_1(m,p)$ is the uniform random variable does not imply
the fact/facts  that its real or/and imaginary part  
  is/are also uniformly distributed (cf. Example 1.4). 

% Rem 3.9
\begin{remark} 
The sufficient condition from Example 3.5 that $N=p$ to be a 
prime number in order that  $X_1(m,p)$ to  be an uniform random variable for 
some (and hence for all) $m$ with $2\le m\le N-1$ 
 is ``probably''  also necessary condition for this assertion. 
This fact is suggested by some heuristic arguments and  the following examples 
of the random variables concerning  the small composite integer values 
of $N$ and $m$:

1)  $N=4$,  $m=2$, $\varepsilon=j$, $X_1(2)=\{1,-1,\varepsilon,-\varepsilon
 \}\Rightarrow 1-1=\varepsilon-\varepsilon=0$; 
   
2) $N=6$, $m=2$; $\varepsilon=(-1+j\sqrt{3})/2$, 
$X_1(2)=\{1,-1,\varepsilon,-\varepsilon, \varepsilon^2,-\varepsilon^2\}
 \} \Rightarrow 1+\varepsilon+\varepsilon^2=-1-\varepsilon-\varepsilon^2=0$;

3) $N=8$, $m=4$; 
$\varepsilon=(1+j)\sqrt{2}/2$, $X_1(4)=\{1,-1,j,-j, \varepsilon,
-\varepsilon, \overline{\varepsilon},-\overline{\varepsilon}\}
 \Rightarrow 1+ (-1)+ j+(-j)=
\varepsilon+(-\varepsilon)+ \overline{\varepsilon}+(-\overline{\varepsilon})
=0$. 
\end{remark}

Some computations and heuristic arguments suggest the following conjecture. 

% CONJ. 3.10
\begin{conjecture} 
Let $N\ge 3$, $l$ and $m$ be positive integers such that
$1\le l\le N$ and $2\le m\le N-1$ and both integers $l$ and $m$ are 
relatively prime to $N$. Then the random variable $X_l(m,N)$
from Definition $1.2$ is uniformly distributed if and 
only if $N$ is a prime number.  
   \end{conjecture}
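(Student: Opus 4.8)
The plan is to convert the statement into an injectivity question for subset sums of roots of unity and to attack the two implications separately. First I would record the reduction. Since $\gcd(l,N)=1$, the map $n\mapsto nl\bmod N$ permutes $\{0,1,\ldots,N-1\}$, so $\Phi(l,N)=\{1,\zeta,\zeta^2,\ldots,\zeta^{N-1}\}$ with $\zeta=e^{-2j\pi/N}$ a primitive $N$th root of unity; consequently $X_l(m,N)$ has the same law for every $l$ coprime to $N$, and uniformity is equivalent to the assertion that the $\binom{N}{m}$ sums $\sum_{n\in S}\zeta^n$, over $m$-element subsets $S\subseteq\{0,\ldots,N-1\}$, are pairwise distinct. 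I would also invoke Proposition 2.1 to interchange $m$ and $N-m$ freely, so that one may assume $2\le m\le N/2$ throughout.

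For the implication ``$N$ prime $\Rightarrow$ uniform'' there is nothing to add: this is exactly Example 3.5 together with Claim 3.6. The irreducibility of $1+x+\cdots+x^{p-1}$ over $\Bbb Q$ (Eisenstein's criterion after $x=y+1$) shows that the only $\Bbb Q$-linear relation among $1,\zeta,\ldots,\zeta^{p-1}$ is the multiple of $1+\zeta+\cdots+\zeta^{p-1}=0$, and no such relation has balanced $\{0,\pm1\}$ coefficients; hence two $m$-subsets with equal sum must coincide.

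The real content is the converse: when $N$ is composite one must exhibit a collision. Here I would use vanishing sums of roots of unity. Let $p$ be the smallest prime divisor of $N$ and write $N=pd$; since $N$ is composite, $p\le d$. The subgroup $H=\{0,d,2d,\ldots,(p-1)d\}$ of $\Bbb Z/N\Bbb Z$ has order $p$, and every coset $H+t$ satisfies $\sum_{h\in H}\zeta^{h+t}=\zeta^t\sum_{h\in H}\zeta^h=0$, because $\{\zeta^h:h\in H\}$ is the full set of $p$th roots of unity. Choosing two distinct cosets $C_1=H$ and $C_2=H+1$ (disjoint, as $1\notin H$) and any common padding set $B_0$ of size $m-p$ taken from the $N-2p$ remaining exponents, the distinct $m$-subsets $A=C_1\cup B_0$ and $B=C_2\cup B_0$ have the same sum $\sum_{n\in B_0}\zeta^n$. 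This succeeds exactly when $p\le m\le N-p$, and together with the $m\leftrightarrow N-m$ symmetry it settles every $m$ with $p\le\min(m,N-m)$; the coprimality hypothesis $\gcd(m,N)=1$ automatically removes the excluded endpoint $m=p$. In particular, when $N$ is even we may take $p=2$ (so the cosets are antipodal pairs $\{t,t+N/2\}$), and the construction already disposes of the entire range $2\le m\le N-2$.

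The main obstacle is the complementary small-$m$ regime $\min(m,N-m)<p$, which for odd composite $N$ I expect to be genuinely decisive rather than merely technical. There the balanced coset relations are too long to fit inside an $m$-subset, so one must decide whether any collision can occur among sums of fewer than $p$ roots. Writing a putative collision with disjoint supports as a vanishing sum of $2m$ many $2N$th roots of unity (via $-\zeta^b=e^{j\pi}\zeta^b$) and invoking the Lam--Leung theorem on admissible lengths, a parity and length count shows that for $2m<2p$ no minimal relation of odd prime length can participate, so that only antipodal cancellations $\zeta^a+(-\zeta^b)=0$ remain; each of these forces an equality of exponents and hence support overlap, contradicting disjointness. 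For odd $N$ and $m=2$ this can be verified elementarily from the magnitude and argument of $\zeta^a+\zeta^b$, and it shows that $X_l(2,N)$ is uniform for \emph{every} odd composite $N$. Thus the honest conclusion of the plan is that the stated equivalence cannot hold literally in the full range $2\le m\le N-1$ (the boundary value $m=N-1$, where $X_l(N-1,N)\sim -X_l(1,N)$ is always uniform, already escapes the construction since $N-1>N-p$): one should restrict to $p\le m\le N-p$, equivalently demand uniformity for \emph{all} admissible $m$. Determining, for each composite $N$, the exact threshold in $m$ at which a collision first appears is the delicate and apparently open remaining problem.
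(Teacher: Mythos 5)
You should first be aware that the paper offers no proof of this statement: it appears as Conjecture 3.11, supported only by the prime case (Example 3.5) and the heuristic evidence of Remark 3.9, so there is no argument of the author's to compare yours against. Within that caveat, your reduction to injectivity of $m$-subset sums of the $N$th roots of unity, your use of Example 3.5 for the prime direction, and your coset construction $A=H\cup B_0$, $B=(H+1)\cup B_0$ producing collisions for every composite $N$ and every $m$ with $p\le m\le N-p$ are all correct; the three examples in Remark 3.9 are exactly instances of your construction with $p=2$. Most importantly, your closing observation is right and is the essential point: the conjecture as stated is false. The value $m=N-1$ always satisfies $\gcd(m,N)=1$ and $2\le m\le N-1$, and $X_l(N-1,N)$ is uniform for every $N$ (it is distributed as $-X_l(1,N)$); the paper's own Example 1.5 ($N=4$, $m=3$, four distinct equiprobable sums) is already a counterexample. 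Your further claim that $X_l(2,N)$ is uniform for every odd composite $N$ is also correct and yields non-boundary counterexamples such as $N=9$, $m=2$: the $\Bbb Q$-relation module of $1,\zeta,\dots,\zeta^8$ is spanned by the three translates of $1+\zeta^3+\zeta^6=0$, so any $\{0,\pm1\}$-relation must have its nonzero coefficients constant on residue classes mod $3$, which is impossible with exactly two $+1$'s and two $-1$'s.

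Two steps in your plan need repair before it proves a corrected statement. First, a collision only shows that the number of distinct values is smaller than $\binom{N}{m}$; to conclude non-uniformity you must also exhibit two values of different multiplicity (for instance, a subset sum that is uniquely represented), and you never do this --- it is routine in examples but requires an argument in general. Second, in the regime $m<p$ the Lam--Leung theorem on admissible \emph{lengths} of vanishing sums is not by itself enough to force the decomposition into antipodal pairs, since $2m$ is always an admissible length; you need the stronger structural input (Mann's theorem, or the Conway--Jones classification) that a \emph{minimal} vanishing sum of length $n$ involves, after a rotation, only $M$th roots of unity with $M$ squarefree and composed of primes at most $n$. With that substitution your small-$m$ argument goes through. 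The net outcome is that you have refuted the conjecture in its stated generality, proved the intended non-uniformity for $p\le m\le N-p$ modulo the multiplicity issue, and correctly identified the determination of the exact threshold in $m$ as the genuinely open problem.
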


A Number Theory approach to some probabilistic aspects of compressive
sensing  problems is given in \cite{m5}.

% Sec. 3
\section{Proofs of the results}

In order to prove Theorem 2.4, we will need the following 
known identities.

% Lemma 4.1
\begin{lemma} 
Let $N$ and  $l$ be  positive integers such that $l\le N-1$.
Take $\xi=e^{2jl\pi/N}$. Then 
   $$
\sum_{k=1}^N\xi^k=\sum_{k=1}^N\cos\frac{2kl\pi}{N}
=\sum_{k=1}^N\sin\frac{2kl\pi}{N}=0.\leqno(37)
   $$
If in addition, we suppose that $N\not= 2l$, then
   $$
\sum_{k=1}^N\cos\frac{4kl\pi}{N}=\sum_{k=1}^N\sin\frac{4kl\pi}{N}  =0.  
\leqno(38)
   $$
\end{lemma}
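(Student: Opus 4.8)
The plan is to recognize $\xi=e^{2jl\pi/N}$ as a nontrivial $N$th root of unity and evaluate each sum as the real or imaginary part of a geometric series. First I would record the two facts that make the argument work: since $\xi^N=e^{2jl\pi}=1$, the number $\xi$ is an $N$th root of unity, and since $1\le l\le N-1$ we have $0<2l\pi/N<2\pi$, so $\xi\neq 1$. With these in hand, the geometric sum evaluates as
  $$
\sum_{k=1}^N\xi^k=\xi\cdot\frac{\xi^N-1}{\xi-1}=\xi\cdot\frac{1-1}{\xi-1}=0,
  $$
which gives the first equality in (37).

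Next I would extract the trigonometric sums from this single complex identity. Writing $\xi^k=e^{2jkl\pi/N}=\cos\frac{2kl\pi}{N}+j\sin\frac{2kl\pi}{N}$ and separating the real and imaginary parts of $\sum_{k=1}^N\xi^k=0$ immediately yields $\sum_{k=1}^N\cos\frac{2kl\pi}{N}=0$ and $\sum_{k=1}^N\sin\frac{2kl\pi}{N}=0$, completing (37). No further computation is needed, since both sums are forced to vanish by the vanishing of the complex sum.

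For (38) I would apply the same mechanism to $\eta:=\xi^2=e^{4jl\pi/N}$. Again $\eta^N=\xi^{2N}=1$, so $\eta$ is an $N$th root of unity, and the only point requiring care is showing $\eta\neq 1$. Here I would argue that $\eta=1$ forces $N\mid 2l$; since $1\le l\le N-1$ gives $2\le 2l\le 2N-2$, the only multiple of $N$ in that range is $N$ itself, so $\eta=1$ precisely when $N=2l$. Under the hypothesis $N\neq 2l$ we thus have $\eta\neq 1$, and the geometric sum $\sum_{k=1}^N\eta^k=\eta\cdot\frac{\eta^N-1}{\eta-1}=0$ applies verbatim; separating real and imaginary parts of $\eta^k=\cos\frac{4kl\pi}{N}+j\sin\frac{4kl\pi}{N}$ then gives both equalities in (38).

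The computations are entirely routine; the one genuinely load-bearing step is the pair of nonvanishing checks $\xi\neq 1$ and $\eta\neq 1$, since these are exactly where the hypotheses $l\le N-1$ and $N\neq 2l$ enter and where the geometric-series formula would otherwise fail (it is precisely when $\eta=1$, i.e. $N=2l$, that $\sum_{k=1}^N\cos\frac{4kl\pi}{N}=N\neq 0$, explaining why that case must be excluded).
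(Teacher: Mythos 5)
Your proof is correct and follows essentially the same route as the paper: evaluate $\sum_{k=1}^N\xi^k$ as a geometric series that vanishes because $\xi$ is an $N$th root of unity with $\xi\neq 1$, separate real and imaginary parts for (37), and repeat with $\xi^2$ for (38). Your justification that $\xi^2=1$ forces $N=2l$ is in fact slightly more explicit than the paper's, which simply asserts the nonvanishing from the hypothesis $N\neq 2l$.
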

 \begin{proof}
Take 
$$
\xi=\cos\frac{2l\pi}{N}+j\sin\frac{2l\pi}{N}=e^{2jl\pi/N}, S_1=
\sum_{k=1}^N\cos\frac{2kl\pi}{N}\,\, {\rm and}
\,\, S_2=\sum_{k=1}^N\sin\frac{2kl\pi}{N}.
 $$ 
 Then by  de Moivre's formula and the equality $\sum_{t=1}^N\xi^k=0$, 
we immediately obtain 
   \begin{equation*}\begin{split}
S_1+jS_2 &= \sum_{k=1}^N\cos\frac{2kl\pi}{N}+
j\sum_{k=1}^N\sin\frac{2kl\pi}{N}
=\sum_{k=1}^N\left(\cos\frac{2kl\pi}{N}+ j\sin\frac{2kl\pi}{N}\right)\\
&=\sum_{k=1}^N \left(\cos\frac{2kl\pi}{N}+j \sin\frac{2kl\pi}{N}\right)=
\sum_{k=1}^N \left(\cos\frac{2l\pi}{N}+j \sin\frac{2l\pi}{N}\right)^k
\\
&=({\rm since}\,\, \xi \not=1)\quad \sum_{k=1}^N\xi^k=\xi\cdot
\frac{\xi^N-1}{\xi-1}=0.
   \end{split}\end{equation*} 
The above equality shows that $S_1=S_2=0$, which yields (37).

Proceeding in the same manner as above,   with the argument $4kl\pi/N$
instead of $2kl\pi/N$, and using 
the fact that $w:=\cos\frac{4l\pi}{N}+j\sin\frac{4l\pi}{N}\not=1$
(because of the assumption that $N\not= 2l$), 
  we  obtain both identities of  (38).   
    \end{proof}
\begin{proof}[Proof of Proposition $2.1$]

Proof of Proposition 2.1 immediately follows from Definition 1.2 and the 
identities given by (37) of Lemma 4.1.
  \end{proof}

\begin{proof}[Proof of Theorem $2.4$]
For brevity, take $w=e^{-j2l\pi/N}$ and $w_i=w^i$ for every 
$i=1,2,\ldots, N$. Firstly, we consider the case 
when $N$ and  $l$ are  relatively prime  integers. Then  
 the set $\Phi(l,N)$ defined by (3) consists of $N$ distinct elements; 
namely,     
   $$
\Phi(l,N)=\{w_1,w_2,\ldots,w_N\}.\leqno(39)
  $$ 

Then by Definition 1.2, we have
    $$
\Bbb E[X_l(m,N)]=\frac{1}{{N\choose m}}\sum_{\{i_1,i_2,\ldots,i_m\} 
\subset \{1,2,\ldots,N\}}(w_{i_1}+w_{i_2}+\cdots+
w_{i_m}),\leqno(40)
   $$
where the summation ranges over all subsets $\{i_1,i_2,\ldots,i_m\}$
of $\{1,2,\ldots,N\}$ with $1\le i_1<i_2<\cdots <i_s\le N$.
Since any fixed $w_{i_s}$ with $s\in\{1,2,\ldots,N\}$ occurs 
exactly ${N-1\choose m-1}$ times  in the sum on the right hand side 
of (40), and using the fact that $\sum_{i=1}^Nw_i=0$, we find that
     \begin{equation*}\begin{split}
\Bbb E[X_l(m,N)]=&\frac{1}{{N\choose m}}\left({N-1\choose m-1}w_1+
{N-1\choose m-1}w_2+\cdots +{N-1\choose m-1}w_N\right)\\
(41)\qquad\qquad =&
\frac{{N-1\choose m-1}}{{N\choose m}}(w_1+w_2+\cdots +w_N)=0,\qquad\qquad
\qquad\qquad\qquad\qquad\qquad
    \end{split}\end{equation*}
which implies (18). Both equalities from (20) immediately follow 
from (18) in view of the fact that $\Bbb E[X_l(m,N)]=
\Bbb E[U_l(m,N)]+j\Bbb E[V_l(m,N)]$. 
 
If $m=1$, then clearly, $X_l(1,N)$ is the uniform random variable
with $\mathrm{Prob}(X_l(1,N)=w_i)=1/N$ for each $i=1,2,\ldots,N$,
  and so $|X_l(1,N)|$ is the constant random variable
with $\mathrm{Prob}(|X_l(1,N)|=1)=1$.
Then since by (41) $\Bbb E[X_l(m,N)]=0$, we have
   $$
{\rm Var}[X_l(m)]  =\Bbb E[|X_l(m)|^2]-|\Bbb E[X_l(m)]|^2=1.
   $$
The above expression coincides with the expression (19) for 
$m=1$.

Now suppose that $m\ge 2$. Then  we have 
    $$
\Bbb E[|X_l(m)|^2]=\frac{1}{{N\choose m}}\sum_{\{i_1,i_2,\ldots,i_m\} 
\subset \{1,2,\ldots,N\}}(w_{i_1}+w_{i_2}+\cdots+
w_{i_m})(\overline{w_{i_1}+w_{i_2}+\cdots+
w_{i_m}}),\leqno(42)
   $$
where the summation ranges over all subsets $\{i_1,i_2,\ldots,i_m\}$
of $\{1,2,\ldots,N\}$ with $1\le i_1<i_2<\cdots <i_m\le N$.
  Notice that after  multiplication of  terms 
on the right hand side of (42) we obtain that in the obtained sum 
every factor of the form $w_i\bar{w}_i= |w_i|^2$  $(i=1,2,\ldots, N)$ 
occurs exactly 
   ${N-1\choose m-1}$ times, while every factor of the 
form $w_t\bar{w}_s$ with $1\le t<s\le N$, occurs exactly 
   ${N-2\choose m-2}$ times. Accordingly, the equality (42) becomes  
   $$
\Bbb E[|X_l(m)|^2]=\frac{1}{{N\choose m}}\left({N-1\choose m-1}
\sum_{i=1}^N|w_i|^2+
{N-2\choose m-2}\sum_{1\le t<s\le N}w_t\bar{w}_s\right), \leqno(43) 
  $$
whence by using the {\it Pascal's formula} 
${N-1\choose m-1}={N-2\choose m-2}+{N-2\choose m-1}$,
 the facts that $|w_i|=1$ ($i=1,2,\ldots,N$), 
$\sum_{i=1}^N w_i=0$ and the identity ${N\choose m}=\frac{N(N-1)}{m(N-m)}
{N-2\choose m-2}$,  
we obtain
   \begin{equation*}\begin{split}
\Bbb E[|X_l(m)|^2]=&\frac{1}{{N\choose m}}
\left({N-2\choose m-1}\sum_{i=1}^N|w_i|^2\right.\\
  &+\left.\left({N-2\choose m-2}\sum_{i=1}^N|w_i|^2+
{N-2\choose m-2}\sum_{1\le t<s\le N}w_t\bar{w}_s\right)\right)\\
=&\frac{1}{{N\choose m}}
\left({N-2\choose m-1}\sum_{i=1}^N|w_i|^2
+{N-2\choose m-2}\sum_{1\le t\le s\le N}w_t\bar{w}_s\right)\\
=& \frac{1}{{N\choose m}}\left(N{N-2\choose m-1}
+{N-2\choose m-2}\left(\sum_{i=1}^N w_i\right)\left(\sum_{s=1}^N 
\bar{w}_i\right)\right)\\
=&\frac{N{N-2\choose m-1}}{{N\choose m}}=\frac{N{N-2\choose m-1}}
{\frac{N(N-1)}{m(N-m)}{N-2\choose m-1}}=\frac{m(N-m)}{N-1}.
  \end{split}\end{equation*}
From the above expression and (18) we have
  $$
{\rm Var}[X_l(m)]=\Bbb E[|X_l(m)|^2]=\frac{m(N-m)}{N-1}.\leqno(44)
  $$ 
This proves the expression (19). 
 
It remains to prove  the expressions (20) and (21).
Since $w=e^{-j2l\pi/N}$, we have that the real and imaginary part of
 $w^k$ are respectively equal to  $\Re (w^k)= \cos\frac{2kl\pi}{N}$  and 
$\Im (w^k)= -\sin\frac{2kl\pi}{N}$ for every
$k=1,2,\ldots, N$. Then by using the same argument applied in 
the proof of (19) and the assumptions that $1\le l\le N-1$ 
and $N\not= 2l$, we obtain the following analogous equality to (43):
\begin{equation*}\begin{split}
&\Bbb E[(U_l(m))^2]=\frac{1}{{N\choose m}}
\left({N-2\choose m-1}\sum_{k=1}^N\cos^2\frac{2kl\pi}{N}\right.\\
  &+\left.\left({N-2\choose m-2}\sum_{k=1}^N\cos^2\frac{2kl\pi}{N}+
{N-2\choose m-2}\sum_{1\le t<s\le N}\cos\frac{2tl\pi}{N}
\cos\frac{2sl\pi}{N} \right)\right)\\
=&\frac{1}{{N\choose m}}
\left({N-2\choose m-1}\sum_{k=1}^N\cos^2\frac{2kl\pi}{N}
+{N-2\choose m-2}\left(\sum_{t=1}^N \cos\frac{2tl\pi}{N}\right)^2\right)\\
(45)\qquad=& \frac{{N-2\choose m-2}}{{N\choose m}}
\sum_{k=1}^N\cos^2\frac{2kl\pi}{N}
= \frac{{N-2\choose m-2}}{{N\choose m}}\sum_{k=1}^N
\left(\frac{1+ \cos\frac{4kl\pi}{N}}{2}\right)\qquad\qquad\\
=& \frac{{N-2\choose m-2}}{{N\choose m}}
\left(\frac{N}{2}+\sum_{k=1}^N \cos\frac{4kl\pi}{N}\right)=
\frac{{N-2\choose m-2}}{{N\choose m}}\cdot\frac{N}{2}
=\frac{{N-2\choose m-2}}{\frac{N(N-1)}{m(N-m)}{N-2\choose m-2}}
\cdot\frac{N}{2}\\
=&\frac{m(N-m)}{2(N-1)}.
  \end{split}\end{equation*}
 This proves the first equality of (20). Using this equality, (44)
and the equality
   $$
\Bbb E[|X_l(m)|^2]=\Bbb E[(U_l(m))^2]+\Bbb E[(V_l(m))^2],
   $$
we obtain 
    $$
\Bbb E[(V_l(m))^2]=\frac{m(N-m)}{2(N-1)}.
   $$
which together with (45) implies (21). This completes proof of Theorem 2.4.
\end{proof}

\begin{proof}[Proof of Corollary  $2.5$]
In order to prove Corollary 2.5, observe that by (21) of 
Theorem 2.4, we have 
  \begin{equation*}\begin{split}
\Bbb E[(X_l(m))^2] &=\Bbb E[(U_l(m))^2]+
\Bbb E[(V_l(m))^2]-2j\Bbb E[U_l(m)V_l(m)]\\
&=\frac{m(N-m)}{N-1}-2j\Bbb E[U_l(m)V_l(m)].
  \end{split}\end{equation*}
Therefore, the equalities (22) are
 equivalent
to the following one: 
    $$
\Bbb E[U_l(m)V_l(m)]=0.\leqno(46)
    $$
Observe that 
        \begin{equation*}\begin{split}
&\Bbb E[U_l(m)V_l(m)]\\
(47)\qquad  &=\frac{1}{{N\choose m}}
\sum_{\{k_1,k_2,\ldots,k_m\} 
\subset \{1,2,\ldots,n\}\atop \{s_1,s_2,\ldots,s_m\} 
\subset \{1,2,\ldots,n\}}\left( \cos\frac{2k_1l\pi}{N}+
\cos\frac{2k_2l\pi}{N}+\cdots+\cos\frac{2k_ml\pi}{N}\right)\times\qquad  
\\
&\left( \sin\frac{2s_1l\pi}{N}+
\sin\frac{2s_2l\pi}{N}+\cdots+\sin\frac{2s_ml\pi}{N}\right),\\
   \end{split}\end{equation*}
where the summation ranges over all subsets $\{k_1,k_2,\ldots,k_m\}$
and $\{s_1,s_2,\ldots,s_m\}$
of $\{1,2,\ldots,n\}$ with $1\le k_1<k_2<\cdots <k_m\le N$ and
 $1\le s_1<s_2<\cdots <s_m\le N$.
After multiplication of   terms 
on the right hand side of (47) we obtain that in the obtained sum 
 every factor of the form 
$\cos\frac{2kl\pi}{N}\sin\frac{2sl\pi}{N}$
($k,s=1,2,\ldots,N$)   occurs exactly ${N-1\choose m-1}^2$
times in related sum. Therefore, by using the 
trigonometric identity $\cos\alpha\sin\beta= (\sin(\alpha+\beta) 
+\sin(\beta-\alpha))/2$ and  the identity (37) of Lemma 4.1, we have
 \begin{equation*}\begin{split}
&\Bbb E[U_l(m)V_l(m)]=\frac{1}{{N\choose m}^2}
\left({N-1\choose m-1}^2 \sum_{k=1}^N\sum_{s=1}^N\cos\frac{2kl\pi}{N}
\sin\frac{2sl\pi}{N}\right)\\
&=\frac{1}{2{N\choose m}^2}
\left({N-1\choose m-1}^2 \sum_{k=1}^N\sum_{s=1}^N\left(
\sin\frac{2(k+s)l\pi}{N}+\sin\frac{2(s-k)l\pi}{N}
\right)\right)\\
&=\frac{1}{2{N\choose m}^2}\left(\sum_{k=1}^N\sum_{s=1}^N
\sin\frac{2(k+s)l\pi}{N}+\sum_{k=1}^N\sum_{s=1}^N\sin\frac{2(s-k)l\pi}{N}
\right)\\
&({\rm because\,\, of\,\, the\,\, periodicity\,\, of\,\, 
the\,\, function\,\,} \sin x)\\
&=\frac{1}{2{N\choose m}^2}\left(N\cdot\sum_{s=1}^N
\sin\frac{2sl\pi}{N}+N\cdot\sum_{s=1}^N\sin\frac{2sl\pi}{N}
\right)=0.
  \end{split}\end{equation*}
Hence, the equality (46) holds and the proof of the corollary  is completed.
  \end{proof}

\begin{proof}[Proof of Corollary $2.6$]. Both equalities 
given by (23) immediately follow from  the expressions (20)  and (21) of 
Theorem 2.4, taking into account  that   
${\rm Var}[X^2]=\Bbb E[X^2]-(\Bbb E[X])^2$
holds for arbitrary real-valued random variable $X$  with the expected 
value $\Bbb E[X]$ and the variance  ${\rm Var}[X]$.
  \end{proof}

\begin{proof}[Proof of Theorem $2.8$]
  Take $w=e^{-j2l\pi}/N$. Then the multiset  $\Phi(l,N)$ defined by (3) 
can be written as     
  $$
\Phi(l,N)=\{1,w,w^2,\ldots,w^{N-1}\}.  $$ 
Notice that by Definition 1.2, the random variable $X_l(m)$ is 
``uniformly''  defined on the set $\Sigma_m$  of all $m$-element sums of 
$\Phi(l,N)$, 
i.e., on the set  consisting of all sums formed of some $m$ elements of the  
set $\Phi(l,N)$.
Therefore, the random variable $(X_l(m))^k$ is ``uniformly'' defined on the set 
     $$
S_k:=\{(w^{i_1}+w^{i_2}+\cdots+ w^{i_m})^k: 0\le i_1<i_2<\cdots< i_{m}\le N-1
 \}.
       $$ 
Notice that the set $S_k$ is invariant under multiplication by 
$w^k$, i.e., there holds
   $$
w^kS_k:=\{w^kz:\, z\in S_k\}=S_k.
   $$
Accordingly, and taking into account that the random variable $X_l(m)$ is 
``uniformly''  defined on the set $\Sigma_m$ in the sense that 
$\mathrm{Prob}\left(X_l(m)= z\right)=1/{{N\choose m}}$ for each
$z\in \Sigma_m$, we conclude that the random variables $(X_l(m))^k$
and $w^k(X_l(m))^k$ have the same distribution. Therefore, we have 
 $$
\Bbb E [(X_l(m))^k]=\Bbb E [w^k(X_l(m))^k],
 $$
whence taking $\Bbb E [w^k(X_l(m))^k]=w^k \Bbb E [(X_l(m))^k]$, we obtain 
 $$
(1-w^k)\Bbb E [(X_l(m))^k]=0.
 $$
Since by the  assumption of the theorem,  $k$ is not divisible by $N/\gcd(N,l)$,
it follows that $kl/N$ is not an integer and thus, 
$w^k=\cos \frac{2kl\pi}{N}-j\sin \frac{2kl\pi}{N}\not= 1$.
In view of this fact, the above equality yields 
 $$
\Bbb E [(X_l(m))^k]=0,
  $$
as desired.
 \end{proof}

%Proof of Proposition 2.10 
 \begin{proof}[Proof of Proposition $2.10$]
For brevity, take $w=e^{-2jl\pi/N}$. First notice that the assertion 
holds for $l=0$ 
since $X_0(m)$  is the constant  random variable such that
   $$
\mathrm{Prob}\left(X_0(m)=m\right)=1.
   $$ 

Now suppose that $1\le l\le N-1$.
By definition of $\Bbb E[\left(X_l(m)\right)^k]$ and using the additive 
property for the expectation, we find that
   $$
\Bbb E[\left(X_l(m)\right)^k]=
\frac{1}{{N\choose m}}\sum_{\{i_1,i_2,\ldots,i_m\} 
\subset \{1,2,\ldots,N\}}(w^{i_1}+w^{i_2}+\cdots+
w^{i_m})^k,\leqno(48)
   $$
where the summation ranges over  all subsets $\{i_1,i_2,\ldots,i_m\}$
of $\{1,2,\ldots,N\}$ with $1\le i_1<i_2<\cdots <i_m\le N$.
Consider the polynomial $P_k(x_1,\ldots,x_N)\in 
\Bbb R[x_1,\ldots,x_N]$ of $N$ real variables 
$x_1,\ldots,x_N$ defined as 
  $$
P_k(x_1,\ldots,x_N)=\frac{1}{{N\choose m}}\sum_{\{i_1,i_2,\ldots,i_m\} 
\subset \{1,2,\ldots,N\}}(x_{i_1}+x_{i_2}+\cdots+x_{i_m})^k,\leqno(49)
   $$
where the summation ranges over all subsets $\{i_1,i_2,\ldots,i_m\}$
of $\{1,2,\ldots,N\}$ with $1\le i_1<i_2<\cdots <i_m\le N$.
Clearly, $P_k$ is a homogeneous symmetric polynomial of degree
$k$. Let us recall that a polynomial in $n$ real (or complex) variables,
$P(x_1,\ldots,x_n)\in \Bbb R[x_1,\ldots,x_n]$
(or $P(x_1,\ldots,x_n)\in \Bbb C[x_1,\ldots,x_n]$) is known 
as a {\it symmetric polynomial} if for any permutation 
$\sigma$ of the set $\{1,2,\ldots ,n\}$, 
$P(x_{\sigma(1)},\ldots,  x_{\sigma(n)})=P(x_1,\ldots,x_n)$.
  
Then by {\it fundamental theorem of symmetric functions},
the polynomial $P_k$ defined by (49)  can be expressed as a polynomial in the 
{\it elementary symmetric polynomials} on the variables $x_1,\ldots,x_N$, 
i.e.,
   $$
 P_k(x_1,\ldots,x_N)=Q_k(\sigma_1,\ldots,\sigma_N),\leqno(50)
   $$
where $Q_k$ is a polynomial in 
$\Bbb R[x_1,\ldots,x_N]$ and $\sigma_s$ ($s=1,\ldots, N$) are elementary 
symmetric polynomials in $\Bbb R[x_1,\ldots,x_N]$  defined as 
  $$
\sigma_s(x_1,\ldots,x_N)=
\sum_{\{i_1,i_2,\ldots,i_s\} 
\subset \{1,2,\ldots,N\}}x_{i_1}\cdots x_{i_s},  
  $$
where the summation ranges over all subsets $\{i_1,\ldots,i_s\}$
of $\{1,\ldots,N\}$ with $1\le i_1<\cdots <i_s\le N$.
The  $N$th {\it power sum} (or the $N$th {\it power symmetric 
function}) $p_n\in\Bbb R[x_1,\ldots,x_N]$ is defined as
   $$
p_n(x_1,\ldots,x_N)=\sum_{i=1}^Nx_i^n.
  $$
Then by {\it Newton's identities} (also known as the 
{\it Newton-Girard formula}; see, e.g., \cite{me}; 
cf. \cite[Lemma 2.1]{m6}), for all $n\ge 1$ there 
holds
   $$
p_n(x_1,\ldots,x_N)=(-1)^{n-1}n\sigma_n(x_1,\ldots,x_N)
+ \sum_{i=1}^{n-1}\sigma_{n-i}(x_1,\ldots,x_N)p_i(x_1,\ldots,x_N).\leqno(51)
  $$ 

Let us recall  that the formulae (49), (50) and (51) are also valid 
for the complex values $x_k=w^k$ ($k=1,\ldots,N$).
Accordingly, for all $n\in\Bbb N$ we have 
   $$
p_n(w,\ldots,w^N)= \sum_{k=1}^N w^{kn}=\left\{
 \begin{array}{ll}
0 &\,\, {\rm if}\,\, n\,\,  {\rm is\,\, not\,\, divisible\,\,
 by}\,\, \frac{N}{\gcd(N,l)}\\
 N &\,\, {\rm if}\,\, n\,\,  {\rm is\,\,  divisible\,\,
 by}\,\, \frac{N}{\gcd(N,l)}.\\
  \end{array}\right.\leqno(52)
   $$
We will prove by induction on $n\ge 1$ that $\sigma_n(w,\ldots,w^N)$
 is a real number for all $n\in\Bbb N$. For $n=1$ we have 
    $$
\sigma_1(w,\ldots,w^N)=\sum_{t=1}^Nw^t=0,
   $$
and thus, the induction base holds. Suppose that $\sigma_i(w,\ldots,w^N)$
is a real number for each $i\ge 1$ less than $n$. 
  Then by the identity (51), we have   
  \begin{equation*}\begin{split}
&\sigma_n(x_1,\ldots,x_N)\\
&=\frac{(-1)^{n-1}}{n}\left(p_n(x_1,\ldots,x_N)
- \sum_{i=1}^{n-1}\sigma_{n-i}(x_1,\ldots,x_N)p_i(x_1,\ldots,x_N)
\right).
  \end{split}\end{equation*}
The above formula with $(w,\ldots,w^N)$ instead of $(x_1,\ldots, x_N)$ 
 together with the equalities (52) and the 
induction hypothesis implies that $\sigma_n(w,\ldots,w^N)$ is a real number, 
which finishes the induction proof.
Hence, if we substitute $x_k=w^k$ ($k=1,2,\ldots, N$) into (49) and (50) and  
$a_k=\sigma_k(w,\ldots,w^N)$
  into (50) ($k=1,\ldots,N$), and comparing then (48) and (49), 
we immediately obtain 
   $$
\Bbb E [(X_l(m))^k]=Q_k(a_1,\ldots, a_N).
   $$ 
Since $a_1,\ldots,a_N$  and the all coefficients 
of the polynomial $Q_k$ are  real numbers, we conclude that 
$Q_k(a_1,\ldots, a_N)$ is also a real number. 
Therefore, from the above equality it follows  that 
$\Bbb E [(X_l(m))^k]$ is  a real number. 
This completes  proof of the proposition.
   \end{proof}

\end{document}